\documentclass[12pt,oneside]{amsart}
\usepackage[letterpaper,margin=1in]{geometry}
\usepackage[T1]{fontenc}
\usepackage{lmodern}
\usepackage[utf8]{inputenc}
\usepackage{amsmath,amssymb,amsthm}
\usepackage{microtype}
\usepackage{tikz}
\usepackage[hidelinks]{hyperref}

\theoremstyle{plain}
\newtheorem{theorem}{Theorem}[section]
\newtheorem{proposition}[theorem]{Proposition}
\newtheorem{lemma}[theorem]{Lemma}
\newtheorem{corollary}[theorem]{Corollary}
\theoremstyle{definition}
\newtheorem{definition}[theorem]{Definition}
\theoremstyle{remark}

\numberwithin{equation}{section}

\title{Spectral synthesis with the complexity parameter in ${\mathbb Z}_N^d$}
\author{A. Iosevich}
\address{Department of Mathematics, University of Rochester, Rochester, NY}
\email{iosevich@gmail.com}
\author{Z. Li}
\address{Department of Mathematics, University of Rochester, Rochester, NY}
\email{zli154@ur.rochester.edu}
\author{K. Yu}
\address{Department of Mathematics, University of Rochester, Rochester, NY}
\email{kyu26@ur.rochester.edu}
\thanks{A.~I. was supported in part by the National Science Foundation under NSF DMS-2154232.}
\date{}

\begin{document}
\begin{abstract}
We study asymptotic spectral synthesis on $\mathbb Z_N^d$ using the
Fourier ratio, the quotient of the counting-measure $\ell^1$ and $\ell^2$
norms of the unitary Fourier transform. For supports of cardinality
$N^{\alpha+o(1)}$, the Fourier-ratio concentration exponent satisfies
$0\leq\kappa\leq\frac{\alpha}{2}$. We prove a finite-scale synthesis
estimate that uses either the Fourier ratio of a function or that of its
support indicator. In particular, the indicator condition applies uniformly
to arbitrary weights on the support. Uniform $\ell^p$ bounds imply decay of
the largest Fourier coefficient when
$2\leq p<\frac{2(d-2\kappa)}{\alpha-2\kappa}$, with every finite
$p\geq2$ allowed at maximal concentration. Random unions of cosets show
that the finite critical exponent is sharp for every admissible pair in
each prescribed ambient dimension, along prime-power moduli.

We also determine the exact synthesis constant for sets with a transitive
group of affine symmetries. For paraboloids this constant is an explicit
divisor sum. The formula proves endpoint synthesis along powers of a fixed
odd prime, although the endpoint fails along primes. Along products of
distinct small primes, synthesis extends to a larger exponent. These
families have the same Fourier-ratio exponent. Finally, we relate Fourier
concentration to generalized Salem estimates, additive energy, Fourier
algebra norms, and necessary conditions for extension estimates.
\end{abstract}
\maketitle

\section{Introduction}
\label{sec:introduction}

Let us begin with a simple question. Suppose that a function on
$\mathbb Z_N^d$ is supported on a small set and its Fourier transform has
bounded $\ell^p$ norm. Must its largest Fourier coefficient become small
as $N$ grows? The answer depends on $p$, and a first application of the
Cauchy--Schwarz inequality already tells us where to look.

We use the unitary Fourier transform and counting-measure norms. If
$f$ is supported on $S\subseteq\mathbb Z_N^d$, then
\begin{equation}
\label{eq:intro-support}
\|\widehat f\|_\infty
\leq N^{-\frac{d}{2}}\|f\|_1
\leq N^{-\frac{d}{2}}|S|^{\frac{1}{2}}\|f\|_2
=N^{-\frac{d}{2}}|S|^{\frac{1}{2}}\|\widehat f\|_2.
\end{equation}
For $p\geq2$, another application of H\"older's inequality gives
$\|\widehat f\|_2\leq N^{d(\frac{1}{2}-\frac{1}{p})}\|\widehat f\|_p$.
Thus
\[
\|\widehat f\|_\infty
\leq |S|^{\frac{1}{2}}N^{-\frac{d}{p}}\|\widehat f\|_p.
\]
If $|S_N|=N^{\alpha+o(1)}$, this estimate gives the desired conclusion
for $p<\frac{2d}{\alpha}$. The question is whether information about
$S_N$ or $f_N$ can improve this range.

A hyperplane and a paraboloid over a prime modulus provide a useful
comparison. Both contain $N^{d-1}$ points. The Fourier transform of the
hyperplane indicator is supported on a dual line, whereas the paraboloid
indicator has nonzero Fourier coefficients at almost every frequency.
The support cardinality does not see this distinction. An improvement
must therefore use information that cardinality alone does not record.

For a nonzero function $f$, define its Fourier ratio by
\[
\operatorname{FR}(f)=\frac{\|\widehat f\|_1}{\|\widehat f\|_2}.
\]
It lies between $1$ and $N^{\frac{d}{2}}$. A small ratio means that the
Fourier transform is concentrated in the sense measured by these two
norms. For a sequence $\{f_N\}$, we record the power of $N$ gained over
the largest possible ratio by setting
\begin{equation}
\label{eq:intro-kappa}
\kappa(\{f_N\})
=\liminf_{N\to\infty}
\frac{\log\bigl(N^{\frac{d}{2}}/\operatorname{FR}(f_N)\bigr)}{\log N}.
\end{equation}
When $f_N$ is supported on a set of cardinality $N^{\alpha+o(1)}$,
Fourier inversion gives $0\leq\kappa(\{f_N\})\leq\frac{\alpha}{2}$.
Larger values of $\kappa$ correspond to greater concentration.

The connection with spectral synthesis comes from a familiar uncertainty
principle. Agranovsky and Narayanan \cite{AgranovskyNarayanan} proved that
an $L^p(\mathbb R^d)$ function whose Fourier transform is supported on
an $\alpha$-dimensional $C^1$ submanifold, where
$1\leq\alpha<d$ is an integer, must vanish when
$2\leq p\leq\frac{2d}{\alpha}$. Deodhar and Iosevich
\cite{DeodharIosevich} introduced a Fourier-ratio refinement in the
Euclidean and manifold settings. The interpolation argument behind our
first estimate is the finite-group version of that mechanism. In a
single finite group, uniqueness of this kind is impossible without
additional conditions. We instead ask for a quantitative estimate whose
constant tends to zero along a sequence of groups.

Bhowmik, Deodhar, and Iosevich \cite{BhowmikDeodharIosevich} established
the basic finite-group estimates and their sharpness, including the
cardinality estimate above and the support-indicator convolution
estimate used below. Their formulation places the support condition
on the Fourier transform; applying the unitary Fourier transform
interchanges that formulation with ours. The present paper measures
the improvement supplied by the Fourier ratio, proves sharpness for
every admissible pair of exponents, and uses affine symmetry to
determine exact arithmetic synthesis constants.

There are two distinct ways to use the Fourier ratio. One can impose a
condition on the particular function being studied, or one can impose a
condition on the indicator of a set and seek an estimate for every
function supported there. The following theorem supplies both estimates.

\begin{theorem}
\label{thm:intro-finite}
Let $\varnothing\neq S\subseteq\mathbb Z_N^d$, and let $f\neq0$ be
supported on $S$. For $2\leq p<\infty$,
\begin{equation}
\label{eq:intro-finite}
\|\widehat f\|_\infty
\leq N^{-\frac{d}{2}}|S|^{\frac{1}{2}}
\min\{\operatorname{FR}(f),\operatorname{FR}(1_S)\}^{1-\frac{2}{p}}
\|\widehat f\|_p.
\end{equation}
\end{theorem}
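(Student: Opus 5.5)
The inequality \eqref{eq:intro-finite} asserts a bound by a minimum, so I will prove the two estimates separately: one with $\operatorname{FR}(f)$ and one with $\operatorname{FR}(1_S)$. Each is an application of H\"older's inequality followed by log-convex interpolation of $\ell^q$ norms, with the Fourier ratio trading an $\ell^1$ norm for an $\ell^2$ norm. Throughout, $\widehat f(\xi)=N^{-d/2}\sum_x f(x)e^{-2\pi i x\cdot\xi/N}$, so Plancherel holds with counting measure.

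\emph{The $\operatorname{FR}(f)$ bound.} I start from the chain \eqref{eq:intro-support}, namely $\|\widehat f\|_\infty\le N^{-d/2}|S|^{1/2}\|\widehat f\|_2$. It then suffices to show
\[
\|\widehat f\|_2\le \operatorname{FR}(f)^{1-\frac2p}\|\widehat f\|_p .
\]
Put $g=\widehat f$. For $p>2$, log-convexity of $\ell^q$ norms gives $\|g\|_2\le\|g\|_1^{\theta}\|g\|_p^{1-\theta}$, where $\tfrac12=\theta+\tfrac{1-\theta}{p}$, that is, $\theta=\frac{p-2}{2(p-1)}$. I substitute $\|g\|_1=\operatorname{FR}(f)\|g\|_2$ and divide by $\|g\|_2^{\theta}$, which is nonzero since $f\ne0$. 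This gives $\|g\|_2^{1-\theta}\le\operatorname{FR}(f)^{\theta}\|g\|_p^{1-\theta}$. The exponent is $\theta/(1-\theta)=\frac{p-2}{p}=1-\frac2p$, as required. The case $p=2$ is trivial.

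\emph{The $\operatorname{FR}(1_S)$ bound.} Since $f=f\cdot1_S$, the unitary convolution identity gives
\[
\widehat f(\xi)=N^{-d/2}\sum_{\eta}\widehat f(\eta)\,\widehat{1_S}(\xi-\eta).
\]
Applying H\"older with exponents $p$ and $p'$ yields $\|\widehat f\|_\infty\le N^{-d/2}\|\widehat f\|_p\|\widehat{1_S}\|_{p'}$. Now set $h=\widehat{1_S}$. Since $1\le p'\le2$, interpolation gives $\|h\|_{p'}\le\|h\|_1^{1-t}\|h\|_2^{t}$, where $\frac1{p'}=(1-t)+\frac t2$, so $t=\frac2p$. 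Writing $\|h\|_1=\operatorname{FR}(1_S)\|h\|_2$ and using $\|h\|_2=\|1_S\|_2=|S|^{1/2}$, I get $\|h\|_{p'}\le\operatorname{FR}(1_S)^{1-\frac2p}|S|^{1/2}$. Combining these gives the second estimate.

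Taking the better of the two estimates yields \eqref{eq:intro-finite}. There is no real obstacle. The only points requiring care are the normalization in the convolution identity, which must produce exactly the factor $N^{-d/2}$, and the bookkeeping of the interpolation exponents. Both come out exactly to $1-\frac2p$, so the two estimates share the same power and can be combined under a single minimum.
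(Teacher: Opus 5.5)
Your proposal is correct and matches the paper's proof step for step. The $\operatorname{FR}(f)$ bound comes from the $\ell^1$--$\ell^p$ interpolation with $\theta=\frac{p-2}{2(p-1)}$ combined with \eqref{eq:intro-support}, and the $\operatorname{FR}(1_S)$ bound comes from writing $f=f1_S$, applying H\"older in the convolution identity, and interpolating $\|\widehat{1_S}\|_{p'}$ between $\ell^1$ and $\ell^2$; your exponent bookkeeping is accurate.
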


The proof appears in Section~\ref{sec:synthesis}. Its first part improves
the comparison between $\ell^2$ and $\ell^p$ using the Fourier ratio of
$f$. Its second part starts from $f=f1_S$ and uses convolution on the
Fourier side. This second argument is what permits arbitrary weights on
$S$.

Suppose now that $|S_N|=N^{\alpha+o(1)}$, where $0<\alpha<d$.
Theorem~\ref{thm:intro-finite} implies that
\begin{equation}
\label{eq:intro-critical}
\sup_N\|\widehat f_N\|_p<\infty
\quad\Longrightarrow\quad
\|\widehat f_N\|_\infty\longrightarrow0
\end{equation}
whenever
\[
2\leq p<p_c(\alpha,\kappa)
:=\frac{2(d-2\kappa)}{\alpha-2\kappa}.
\]
Here $\kappa$ may be the exponent of $\{f_N\}$ or that of
$\{1_{S_N}\}$, and $\kappa<\frac{\alpha}{2}$. If
$\kappa=\frac{\alpha}{2}$, every finite $p\geq2$ is allowed.
When both exponents are available, one can use the larger one.

This is a universal sufficient range. It is also sharp as a statement
depending only on $\alpha$ and $\kappa$. In
Theorem~\ref{thm:all-pair-sharpness}, we construct endpoint
counterexamples for every
$0<\alpha<d$ and $0\leq\kappa<\frac{\alpha}{2}$ in the prescribed
dimension $d$, along powers of a fixed odd prime. The construction has
two ingredients. A subgroup supplies the desired Fourier concentration,
and a random collection of its cosets supplies the remaining support
size. The same moment estimate that controls the critical norm also
determines the Fourier ratio.

An individual family may admit a larger synthesis range. To see exactly
how this can happen, define
\[
\mathcal C_r(S)
:=\sup_{\substack{f\neq0\\ \operatorname{supp}(f)\subseteq S}}
\frac{\|\widehat f\|_\infty}{\|\widehat f\|_r},
\qquad 1\leq r<\infty.
\]
If a group of invertible affine maps preserves $S$ and acts transitively
on it, then averaging over that group gives
\begin{equation}
\label{eq:intro-symmetry}
\mathcal C_r(S)
=\frac{N^{-\frac{d}{2}}|S|}{\|\widehat{1_S}\|_r}.
\end{equation}
Thus the indicator determines the optimal constant for every weight on
the set. The proof of this statement, given in
Theorem~\ref{thm:affine-symmetry}, uses only modulation, affine
invariance of Fourier norms, and the triangle inequality.

For the paraboloid
\[
P_N^{(d)}
=\{(t_1,\ldots,t_{d-1},t_1^2+\cdots+t_{d-1}^2):
t\in\mathbb Z_N^{d-1}\},
\]
where $N$ is odd and $d\geq2$, the resulting formula is
\begin{equation}
\label{eq:intro-divisor}
\mathcal C_r(P_N^{(d)})
=\left(
\sum_{M\mid N}\varphi(M)M^{(d-1)(1-\frac{r}{2})}
\right)^{-\frac{1}{r}}.
\end{equation}
Here $\varphi$ is Euler's totient function. The relevant Gauss sums and
divisor sums are classical and occur in the restriction analysis of
Hickman and Wright \cite{HickmanWright}. Formula~\eqref{eq:intro-divisor}
uses them to identify the exact synthesis constant.

The distinction between different sequences of moduli is already visible
in dimension two. Along primes, synthesis holds precisely for
$1\leq r<4$. Along powers of a fixed odd prime, it holds precisely for
$1\leq r\leq4$, and the endpoint constant is comparable to
$(\log N)^{-\frac{1}{4}}$. Along products of all odd primes up to a growing
bound, it holds precisely for $1\leq r\leq6$. All three families have
support-size exponent $1$ and Fourier-ratio exponent $0$. Thus the
Fourier ratio provides a sharp universal estimate, while the full
Fourier-moment profile can retain further arithmetic information.

Figure~\ref{fig:arithmetic-ranges} places these three examples next to
one another. The support size and concentration exponent remain the
same, while the included endpoint changes with the arithmetic of the
modulus. The formulas in Section~\ref{sec:arithmetic} will explain
where this additional information comes from.

\begin{figure}[htbp]
\centering
\begin{tikzpicture}[x=1cm,y=1cm,font=\small]
\node at (3,3.25) {$d=2,\qquad (\alpha,\kappa)=(1,0)$};
\draw[gray!50,densely dashed,line width=0.4pt] (3,-0.4)--(3,2.85);
\draw[gray!50,densely dashed,line width=0.4pt] (5,-0.4)--(5,2.85);
\node[anchor=east,align=right] at (-0.3,2.5)
  {Odd primes};
\node[anchor=east,align=right] at (-0.3,1.4)
  {Powers of a fixed\\odd prime};
\node[anchor=east,align=right] at (-0.3,0.3)
  {Products of odd primes\\up to $Y$};
\draw[->,gray!55,line width=0.4pt] (0,2.5)--(6.25,2.5);
\draw[->,gray!55,line width=0.4pt] (0,1.4)--(6.25,1.4);
\draw[->,gray!55,line width=0.4pt] (0,0.3)--(6.25,0.3);
\draw[line width=1.8pt] (0,2.5)--(3,2.5);
\draw[line width=1.8pt] (0,1.4)--(3,1.4);
\draw[line width=1.8pt] (0,0.3)--(5,0.3);
\fill (0,2.5) circle[radius=2.6pt];
\fill (0,1.4) circle[radius=2.6pt];
\fill (0,0.3) circle[radius=2.6pt];
\draw[fill=white,line width=1pt] (3,2.5) circle[radius=3pt];
\fill (3,1.4) circle[radius=3pt];
\fill (5,0.3) circle[radius=3pt];
\node[anchor=west] at (6.55,2.5) {$1\leq r<4$};
\node[anchor=west] at (6.55,1.4) {$1\leq r\leq4$};
\node[anchor=west] at (6.55,0.3) {$1\leq r\leq6$};
\draw[->,line width=0.5pt] (0,-0.4)--(6.25,-0.4);
\draw (0,-0.34)--(0,-0.46) node[below=2pt] {$1$};
\draw (1,-0.34)--(1,-0.46) node[below=2pt] {$2$};
\draw (2,-0.34)--(2,-0.46) node[below=2pt] {$3$};
\draw (3,-0.34)--(3,-0.46) node[below=2pt] {$4$};
\draw (4,-0.34)--(4,-0.46) node[below=2pt] {$5$};
\draw (5,-0.34)--(5,-0.46) node[below=2pt] {$6$};
\draw (6,-0.34)--(6,-0.46) node[below=2pt] {$7$};
\node[anchor=west] at (6.4,-0.4) {$r$};
\end{tikzpicture}
\caption{Exact synthesis ranges for the parabola $P_N^{(2)}$.
Black intervals indicate synthesis; filled and open circles indicate
included and excluded endpoints. In the third row, $N$ is the product
of all odd primes at most $Y$, with $Y\to\infty$. Every row has
$(\alpha,\kappa)=(1,0)$. See \eqref{eq:prime-exact},
Corollary~\ref{cor:prime-power-endpoint}, and
Corollary~\ref{cor:primorial}.}
\label{fig:arithmetic-ranges}
\end{figure}

The remaining results explain how the concentration parameter interacts
with additive structure and restriction. We obtain upper bounds for
$\kappa$ from generalized Salem estimates in the sense motivated by
Fraser \cite{Fraser}. A fourth-moment argument shows that maximal
concentration forces a large subset with subpolynomial doubling. We
express this conclusion directly in terms of the Fourier algebra norm
and discuss its relation to the quantitative idempotent theorem of
Green and Sanders \cite{GreenSanders}. We also prove that bounded
doubling by itself does not force maximal Fourier-ratio concentration.
Finally, uniform extension estimates impose an obstruction involving
the upper limit of the finite-scale concentration exponents, including
the corresponding tests on subsets.

Section~\ref{sec:notation} fixes the normalization and basic parameters.
Section~\ref{sec:synthesis} proves the synthesis estimate, and
Section~\ref{sec:symmetry} determines exact constants from affine
symmetry. Section~\ref{sec:geometry} treats the basic geometric examples.
Section~\ref{sec:arithmetic} develops the arithmetic formulas and their
consequences. Section~\ref{sec:random} gives the random constructions and
the full sharpness theorem. Section~\ref{sec:structure} treats Fourier
moments, additive structure, and extension estimates. We end with further
questions in Section~\ref{sec:questions}.

\section{Fourier normalization and concentration parameters}
\label{sec:notation}

Throughout the paper, $N\geq2$ and
$\mathbb Z_N=\mathbb Z/N\mathbb Z$. A family may be indexed by any
unbounded set of moduli. All limits are then taken within that set.
This convention will be used for primes, prime powers, and products of
distinct primes. No compatibility between functions at different
moduli is assumed. In particular, synthesis below means decay in norm,
and does not assert that a function is eventually identically zero.

For $f:\mathbb Z_N^d\to\mathbb C$, define
\[
\widehat f(\xi)=N^{-\frac{d}{2}}
\sum_{x\in\mathbb Z_N^d}f(x)e^{-\frac{2\pi i x\cdot\xi}{N}}.
\]
The inversion and Plancherel formulas are
\[
f(x)=N^{-\frac{d}{2}}\sum_{\xi\in\mathbb Z_N^d}
\widehat f(\xi)e^{\frac{2\pi i x\cdot\xi}{N}},
\qquad
\|f\|_2=\|\widehat f\|_2.
\]
Both identities follow from character orthogonality:
\begin{equation}
\label{eq:character-orthogonality}
\sum_{\xi\in\mathbb Z_N^d}e^{\frac{2\pi i x\cdot\xi}{N}}
=\begin{cases}
N^d,&x=0,\\
0,&x\neq0.
\end{cases}
\end{equation}
For example, the sum factors into $d$ geometric series. If $x\neq0$,
at least one of these series has ratio different from one and sum zero.
Unless a measure is explicitly displayed, norms use counting measure:
\[
\|f\|_r=\left(\sum_x|f(x)|^r\right)^{\frac{1}{r}},
\qquad
\|f\|_\infty=\max_x|f(x)|.
\]
We write $1_S$ for the indicator of $S$. Convolution is unnormalized,
so $(F*G)(\xi)=\sum_\eta F(\eta)G(\xi-\eta)$ and
\begin{equation}
\label{eq:product-transform}
\widehat{fg}=N^{-\frac{d}{2}}\widehat f*\widehat g.
\end{equation}
The notation $A\lesssim B$ means $A\leq CB$ with a constant independent
of the modulus. Fixed dimensions and exponents may enter the constant.
We write $A\asymp B$ when both comparisons hold. The expression $o(1)$
denotes a quantity tending to zero, and $N^{o(1)}$ denotes a positive
factor whose logarithm divided by $\log N$ tends to zero.

\begin{definition}
\label{def:exponents}
For $f\neq0$, set
\[
\operatorname{FR}(f)=\frac{\|\widehat f\|_1}{\|\widehat f\|_2},
\qquad
\kappa_N(f)=
\frac{\log\bigl(N^{\frac{d}{2}}/\operatorname{FR}(f)\bigr)}{\log N}.
\]
For a sequence of nonzero functions, define
\[
\kappa(\{f_N\})=\liminf_{N\to\infty}\kappa_N(f_N),
\qquad
\overline\kappa(\{f_N\})=\limsup_{N\to\infty}\kappa_N(f_N).
\]
A sequence of nonempty sets has support-size exponent $\alpha$ if
$\frac{\log|S_N|}{\log N}\to\alpha$, or equivalently
$|S_N|=N^{\alpha+o(1)}$.
\end{definition}

The lower limit is the appropriate parameter for an estimate that must
eventually hold at every modulus. The upper limit will enter the
extension obstruction, because a uniform extension estimate must also
hold along the moduli where concentration is greatest. If the
finite-scale exponents converge, these two parameters agree.

\begin{lemma}
\label{lem:support-fr}
If $f\neq0$ is supported on $S\subseteq\mathbb Z_N^d$, then
\[
\frac{N^{\frac{d}{2}}}{|S|^{\frac{1}{2}}}
\leq\operatorname{FR}(f)\leq N^{\frac{d}{2}}.
\]
Consequently, if $|S_N|=N^{\alpha+o(1)}$ and $f_N$ is supported on
$S_N$, then
\[
0\leq\kappa(\{f_N\})\leq\overline\kappa(\{f_N\})
\leq\frac{\alpha}{2}.
\]
\end{lemma}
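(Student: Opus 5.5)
The plan is to prove the two-sided bound on $\operatorname{FR}(f)$ directly from H\"older's inequality on the Fourier side and Fourier inversion. The exponent bounds then follow by taking logarithms.

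\emph{Upper bound.} Since $\widehat f$ lives on the $N^d$ points of $\mathbb Z_N^d$, Cauchy--Schwarz gives $\|\widehat f\|_1\leq N^{\frac d2}\|\widehat f\|_2$. Because $f\neq0$, Plancherel gives $\|\widehat f\|_2=\|f\|_2>0$, so we may divide and obtain $\operatorname{FR}(f)\leq N^{\frac d2}$.

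\emph{Lower bound.} By the inversion formula, for every $x$ we have $|f(x)|\leq N^{-\frac d2}\|\widehat f\|_1$. Since $f$ vanishes off $S$,
\[
\|\widehat f\|_2^2=\|f\|_2^2=\sum_{x\in S}|f(x)|^2\leq |S|\,N^{-d}\|\widehat f\|_1^2 .
\]
Taking square roots gives $\|\widehat f\|_2\leq |S|^{\frac12}N^{-\frac d2}\|\widehat f\|_1$, which rearranges to $\operatorname{FR}(f)\geq N^{\frac d2}|S|^{-\frac12}$.

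\emph{Consequence for the exponents.} The two inequalities say $1\leq N^{\frac d2}/\operatorname{FR}(f_N)\leq |S_N|^{\frac12}$. Taking logarithms and dividing by $\log N>0$ gives
\[
0\leq\kappa_N(f_N)\leq \frac{\log|S_N|}{2\log N}.
\]
The right side tends to $\frac\alpha2$ by the assumption $|S_N|=N^{\alpha+o(1)}$. Taking $\liminf$ and $\limsup$, and using $\liminf\leq\limsup$, yields $0\leq\kappa\leq\overline\kappa\leq\frac\alpha2$.

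There is no genuine obstacle in this argument. The only points needing care are the nonvanishing of $\|\widehat f\|_2$, which justifies dividing by it, and keeping track of the $N^{-\frac d2}$ normalization in the inversion formula.
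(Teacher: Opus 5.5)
Your proposal is correct and follows essentially the same route as the paper: Cauchy--Schwarz on the Fourier side for the upper bound, and for the lower bound the inversion estimate $\|\widehat f\|_1\geq N^{\frac d2}\|f\|_\infty$ combined with $\|f\|_2\leq|S|^{\frac12}\|f\|_\infty$. You then take logarithms to get $0\leq\kappa_N(f_N)\leq\frac{\log|S_N|}{2\log N}$ and pass to the limits, exactly as the paper does.
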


\begin{proof}
Cauchy--Schwarz on the Fourier side gives the upper bound. For the lower
bound, inversion and the support condition give
\[
\|\widehat f\|_1\geq N^{\frac{d}{2}}\|f\|_\infty,
\qquad
\|\widehat f\|_2=\|f\|_2\leq|S|^{\frac{1}{2}}\|f\|_\infty.
\]
Dividing proves the estimate. Taking logarithms gives
$0\leq\kappa_N(f_N)\leq\frac{\log|S_N|}{2\log N}$, which proves
the assertions about the limits.
\end{proof}

For indicators, it is useful to keep one more normalization in view.
Define the Fourier algebra norm by
\begin{equation}
\label{eq:algebra-norm}
\mathcal A_N(S)=N^{-\frac{d}{2}}\|\widehat{1_S}\|_1.
\end{equation}
This is the sum of the absolute Fourier coefficients when the transform
uses the normalization $N^{-d}$. Inversion gives
$\mathcal A_N(S)\geq1$ for nonempty $S$, and
\begin{equation}
\label{eq:algebra-kappa}
\operatorname{FR}(1_S)
=\frac{N^{\frac{d}{2}}\mathcal A_N(S)}{|S|^{\frac{1}{2}}},
\qquad
\kappa_N(1_S)=\frac{\log|S|}{2\log N}
-\frac{\log\mathcal A_N(S)}{\log N}.
\end{equation}
In particular, for sets of support-size exponent $\alpha$,
\begin{equation}
\label{eq:maximal-algebra}
\kappa(\{1_{S_N}\})=\frac{\alpha}{2}
\quad\Longleftrightarrow\quad
\mathcal A_N(S_N)=N^{o(1)}.
\end{equation}
For the forward implication, the nonnegative sequence
$\frac{\log\mathcal A_N(S_N)}{\log N}$ has upper limit zero by
\eqref{eq:algebra-kappa}; hence it tends to zero. The reverse implication
follows from the same identity.

\section{The synthesis estimate}
\label{sec:synthesis}

\begin{definition}
\label{def:uniform-synthesis}
Let $1\leq r<\infty$. A family of nonempty sets $\{S_N\}$ admits
uniform spectral synthesis at exponent $r$ if every sequence of
functions $f_N$ supported on $S_N$ satisfies
\[
\sup_N\|\widehat f_N\|_r<\infty
\quad\Longrightarrow\quad
\|\widehat f_N\|_\infty\longrightarrow0.
\]
\end{definition}

We now prove Theorem~\ref{thm:intro-finite}. It is helpful to keep the
two uses of the Fourier ratio separate during the proof. They lead to
the same numerical bound, but one uses the particular function and the
other uses only the set on which it is supported.

\begin{proof}[Proof of Theorem~\ref{thm:intro-finite}]
For $p=2$, the assertion is exactly \eqref{eq:intro-support}. Suppose
that $p>2$. Interpolation between $\ell^1$ and $\ell^p$ gives
\[
\|\widehat f\|_2
\leq\|\widehat f\|_1^\theta\|\widehat f\|_p^{1-\theta},
\qquad
\frac{1}{2}=\theta+\frac{1-\theta}{p}.
\]
Solving for $\theta$ gives
$\theta=\frac{p-2}{2(p-1)}$ and
$\frac{\theta}{1-\theta}=1-\frac{2}{p}$. Substituting
$\|\widehat f\|_1=\operatorname{FR}(f)\|\widehat f\|_2$ and
dividing by $\|\widehat f\|_2^\theta$ yields
\begin{equation}
\label{eq:interpolation-fr}
\|\widehat f\|_2
\leq\operatorname{FR}(f)^{1-\frac{2}{p}}\|\widehat f\|_p.
\end{equation}
Combining this with \eqref{eq:intro-support} proves the bound involving
$\operatorname{FR}(f)$.

To obtain a bound that depends only on $S$, use $f=f1_S$ in
\eqref{eq:product-transform}. H\"older's inequality gives
\[
\|\widehat f\|_\infty
\leq N^{-\frac{d}{2}}\|\widehat{1_S}\|_{p'}\|\widehat f\|_p,
\qquad \frac{1}{p}+\frac{1}{p'}=1.
\]
This is the support-indicator estimate of
\cite[Theorem 5]{BhowmikDeodharIosevich}, in the present formulation.
Since $\frac{1}{p'}=(1-\frac{2}{p})+\frac{1}{2}\frac{2}{p}$, interpolation between
$\ell^1$ and $\ell^2$ gives
\[
\begin{aligned}
\|\widehat{1_S}\|_{p'}
&\leq\|\widehat{1_S}\|_1^{1-\frac{2}{p}}
\|\widehat{1_S}\|_2^{\frac{2}{p}}\\
&=\operatorname{FR}(1_S)^{1-\frac{2}{p}}|S|^{\frac{1}{2}}.
\end{aligned}
\]
This proves the second bound. Taking the smaller of the two right-hand
sides completes the proof.
\end{proof}

The estimate has an equivalent form that makes the role of the support
especially transparent:
\begin{equation}
\label{eq:algebra-synthesis}
\|\widehat f\|_\infty
\leq\left(\frac{|S|}{N^d}\right)^{\frac{1}{p}}
\mathcal A_N(S)^{1-\frac{2}{p}}\|\widehat f\|_p.
\end{equation}
The density of $S$ supplies a gain, and its Fourier algebra norm measures
the cost of passing from an arbitrary function to the support indicator.

\begin{theorem}
\label{thm:asymptotic}
Suppose $|S_N|=N^{\alpha+o(1)}$, where $0<\alpha<d$, and $f_N\neq0$
is supported on $S_N$. Set
\[
\kappa_f=\kappa(\{f_N\}),\qquad
\kappa_S=\kappa(\{1_{S_N}\}),\qquad
\kappa=\max\{\kappa_f,\kappa_S\}.
\]
For every fixed $2\leq p<\infty$ and $\varepsilon>0$, all sufficiently
large $N$ satisfy
\begin{equation}
\label{eq:asymptotic-estimate}
\|\widehat f_N\|_\infty
\leq N^{\frac{\alpha-d}{2}
+(\frac{d}{2}-\kappa)(1-\frac{2}{p})+\varepsilon}
\|\widehat f_N\|_p.
\end{equation}
If $\kappa<\frac{\alpha}{2}$, bounded $\ell^p$ norms therefore imply
$\|\widehat f_N\|_\infty\to0$ whenever
\[
2\leq p<\frac{2(d-2\kappa)}{\alpha-2\kappa}.
\]
If $\kappa=\frac{\alpha}{2}$, the conclusion holds for every finite
$p\geq2$. Each conclusion also holds using either $\kappa_f$ or
$\kappa_S$ alone. In the latter case it is uniform over all functions
supported on $S_N$.
\end{theorem}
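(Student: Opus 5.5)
The plan is to plug the asymptotic information into the finite-scale estimate of Theorem~\ref{thm:intro-finite}. Fix $2\leq p<\infty$ and $\varepsilon>0$, and let $\kappa'$ denote either $\kappa_f$ or $\kappa_S$. Since $\kappa'$ is a lower limit, for all sufficiently large $N$ we have $\kappa_N\geq\kappa'-\delta$ for any prescribed $\delta>0$. Here $\kappa_N$ means $\kappa_N(f_N)$ or $\kappa_N(1_{S_N})$ accordingly. By Definition~\ref{def:exponents} this says exactly that $\operatorname{FR}\leq N^{\frac d2-\kappa'+\delta}$. Likewise $|S_N|\leq N^{\alpha+\delta}$ eventually. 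Substituting both into \eqref{eq:intro-finite} gives
\[
\|\widehat f_N\|_\infty\leq N^{-\frac d2+\frac{\alpha+\delta}{2}+(\frac d2-\kappa'+\delta)(1-\frac2p)}\|\widehat f_N\|_p .
\]
Choosing $\delta$ small in terms of $\varepsilon$ (e.g.\ $\delta=\varepsilon/2$ suffices, since $1-\frac2p<1$) yields \eqref{eq:asymptotic-estimate} with $\kappa'$ in place of $\kappa$. Because \eqref{eq:intro-finite} involves the minimum of the two Fourier ratios, the same bound holds with whichever of $\kappa_f,\kappa_S$ is larger, i.e.\ with $\kappa$. When $\kappa'=\kappa_S$, the threshold beyond which the bound holds depends only on the sets $S_N$, not on $f_N$. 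Hence the estimate is uniform over all functions supported on $S_N$.

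The next step is to analyze the exponent
\[
E(p)=\frac{\alpha-d}{2}+\Bigl(\frac d2-\kappa\Bigr)\Bigl(1-\frac2p\Bigr).
\]
If $\sup_N\|\widehat f_N\|_p<\infty$ and $E(p)<0$, pick $\varepsilon<-E(p)$; the right side of \eqref{eq:asymptotic-estimate} then tends to $0$. Multiplying out, $E(p)=\frac{\alpha}{2}-\kappa-\frac{d-2\kappa}{p}$. So $E(p)<0$ is equivalent to $p(\alpha-2\kappa)<2(d-2\kappa)$. Note that $d-2\kappa>0$, because $\kappa\leq\alpha/2<d/2$ by Lemma~\ref{lem:support-fr}. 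If $\kappa<\alpha/2$, this is exactly $p<\frac{2(d-2\kappa)}{\alpha-2\kappa}$. This critical value exceeds $2$, since $d>\alpha$, so the range is nonempty. If $\kappa=\alpha/2$, then $E(p)=-\frac{d-\alpha}{p}<0$ for every finite $p$.

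The only mild subtlety, and the step I would check most carefully, is the passage from the lower limit to an eventual inequality, together with the bookkeeping of $\delta$ versus $\varepsilon$. One must use the lower limit (not the upper), since the estimate is required at all large $N$. One must also confirm that applying the result separately to $\kappa_f$ or $\kappa_S$ gives the "either alone" statements. Everything else is direct algebra.
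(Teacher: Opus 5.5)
Your proposal is correct and follows the paper's proof essentially line for line. Both take eventual bounds $|S_N|\leq N^{\alpha+\delta}$ and $\operatorname{FR}\leq N^{\frac{d}{2}-\kappa'+\delta}$ from the lower limits, substitute them into Theorem~\ref{thm:intro-finite} using the minimum of the two ratios, and rewrite the exponent as $\frac{\alpha-2\kappa}{2}-\frac{d-2\kappa}{p}$. Your explicit choice $\delta=\varepsilon/2$ and your observation that the threshold depends only on $S_N$ in the $\kappa_S$ case are valid extra detail.
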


\begin{proof}
For every $\delta>0$, the support-size assumption gives
$|S_N|\leq N^{\alpha+\delta}$ for all large $N$. The definitions of the
lower limits give, for the same range after increasing its starting
point,
\[
\operatorname{FR}(f_N)\leq N^{\frac{d}{2}-\kappa_f+\delta},
\qquad
\operatorname{FR}(1_{S_N})\leq N^{\frac{d}{2}-\kappa_S+\delta}.
\]
Their minimum is bounded by $N^{\frac{d}{2}-\kappa+\delta}$.
Theorem~\ref{thm:intro-finite} now gives
\eqref{eq:asymptotic-estimate} upon taking $\delta$ sufficiently small
in terms of $\varepsilon$.

The exponent without $\varepsilon$ can be written as
\[
\frac{\alpha-d}{2}
+\left(\frac{d}{2}-\kappa\right)\left(1-\frac{2}{p}\right)
=\frac{\alpha-2\kappa}{2}-\frac{d-2\kappa}{p}.
\]
It is negative precisely in the stated range when
$\kappa<\frac{\alpha}{2}$. At maximal concentration it equals
$-\frac{d-\alpha}{p}$, which is negative for every finite $p$.
Choose $\varepsilon$ smaller than the absolute value of this exponent.
The assumed bound for $\|\widehat f_N\|_p$ proves convergence to zero.
\end{proof}

For $\kappa=0$ we recover $p<\frac{2d}{\alpha}$. If
$0<\kappa<\frac{\alpha}{2}$, then
\[
\frac{2(d-2\kappa)}{\alpha-2\kappa}-\frac{2d}{\alpha}
=\frac{4\kappa(d-\alpha)}{\alpha(\alpha-2\kappa)}>0.
\]
The gain is therefore strict. For completeness, $1\leq p<2$ causes no
difficulty: $\|\widehat f_N\|_2\leq\|\widehat f_N\|_p$ and
\eqref{eq:intro-support} already imply synthesis when $\alpha<d$.

Let us look at the conclusion with $d=2$ and $p=4$. The strict
inequality in Theorem~\ref{thm:asymptotic} becomes $\alpha<1+\kappa$,
which is the shaded region in Figure~\ref{fig:parameter-region}.
Increasing concentration at a fixed support size moves the parameters
upward toward this region. The parabola lies on its boundary, so its
endpoint behavior requires the additional arithmetic information
displayed in Figure~\ref{fig:arithmetic-ranges}.

\begin{figure}[htbp]
\centering
\begin{tikzpicture}[x=4.6cm,y=4.6cm,font=\small]
\fill[gray!20] (0,0)--(1,0)--(2,1)--cycle;
\draw[->,line width=0.6pt] (0,0)--(2.17,0)
  node[right] {$\alpha$};
\draw[->,line width=0.6pt] (0,0)--(0,1.13)
  node[above] {$\kappa$};
\draw[gray!60,densely dotted,line width=0.5pt] (2,0)--(2,1);
\draw[line width=0.9pt] (0,0)--(2,1);
\draw[dashed,line width=0.9pt] (1,0)--(2,1);
\draw (1,0.018)--(1,-0.018) node[below=3pt] {$1$};
\draw (2,0.018)--(2,-0.018) node[below=3pt] {$2$};
\draw (0.018,0.5)--(-0.018,0.5)
  node[left=3pt] {$\frac{1}{2}$};
\draw (0.018,1)--(-0.018,1) node[left=3pt] {$1$};
\node[below left=3pt] at (0,0) {$0$};
\node[align=center] at (0.92,0.26)
  {Synthesis\\guaranteed};
\node[text width=2.7cm,align=center] at (1.6,0.16)
  {No universal\\conclusion};
\node[rotate=26.565,anchor=south,inner sep=5pt] at (1.5,0.75)
  {$\kappa=\frac{\alpha}{2}$};
\node[rotate=45,anchor=north,inner sep=5pt] at (1.63,0.63)
  {$\kappa=\alpha-1$};
\fill (1,0.5) circle[radius=2.6pt];
\node[anchor=south east,inner sep=4pt] at (1,0.5) {$H$};
\draw[fill=white,line width=0.9pt] (1,0) circle[radius=2.8pt];
\node[anchor=south east,inner sep=5pt] at (1,0) {$P$};
\draw[fill=white,line width=0.7pt] (0,0) circle[radius=2pt];
\draw[fill=white,line width=0.7pt] (2,0) circle[radius=2pt];
\draw[fill=white,line width=0.7pt] (2,1) circle[radius=2pt];
\node[anchor=north east] at (2.1,1.13) {$d=2,\qquad p=4$};
\end{tikzpicture}
\caption{The universal synthesis region for $d=2$ and $p=4$.
Admissible parameters satisfy $0<\alpha<2$ and
$0\leq\kappa\leq\frac{\alpha}{2}$, with maximal concentration on the
upper edge. The shaded region $\alpha<1+\kappa$ satisfies
Theorem~\ref{thm:asymptotic}; the dashed critical line is excluded from
that guarantee. The hyperplane has parameters
$H=(1,\frac{1}{2})$, while every parabola family in
Figure~\ref{fig:arithmetic-ranges} has parameters $P=(1,0)$.
The different endpoint behavior at $P$ cannot be determined from
these two exponents alone.}
\label{fig:parameter-region}
\end{figure}

The finite-scale estimate contains more information than a pair of lower
limits. One may replace $\kappa$ in Theorem~\ref{thm:asymptotic} by
\[
\liminf_{N\to\infty}
\max\{\kappa_N(f_N),\kappa_N(1_{S_N})\}.
\]
This can improve the conclusion when the two sources of concentration
are effective at different moduli. We will not need this refinement.

At a finite critical exponent, powers of $N$ cancel. The original
estimate still implies endpoint synthesis if
\[
N^{-\frac{d}{2}}|S_N|^{\frac{1}{2}}
\operatorname{FR}(1_{S_N})^{1-\frac{2}{p_c}}
\longrightarrow0.
\]
This sufficient condition can detect logarithmic improvements. It is
not necessary, as the exact arithmetic examples below will show.

\section{Exact synthesis constants from affine symmetry}
\label{sec:symmetry}

To improve a general estimate, it is useful to know the quantity we are
trying to estimate. For nonempty $S\subseteq\mathbb Z_N^d$ and
$1\leq r<\infty$, recall
\begin{equation}
\label{eq:synthesis-constant}
\mathcal C_r(S)=
\sup_{\substack{f\neq0\\\operatorname{supp}(f)\subseteq S}}
\frac{\|\widehat f\|_\infty}{\|\widehat f\|_r}.
\end{equation}
Definition~\ref{def:uniform-synthesis} is equivalent to
$\mathcal C_r(S_N)\to0$. Indeed, this convergence gives the required
decay after multiplication by the bounded input norms. Conversely, if
the constants do not tend to zero, there are $c>0$ and a subsequence
on which $\mathcal C_r(S_N)\geq c$. Choose functions there with Fourier
$\ell^r$ norm one and largest Fourier coefficient at least
$\frac{c}{2}$, and take zero functions at the other moduli.
The resulting sequence violates uniform synthesis.

Two observations help calibrate the definition. For every nonempty $S$,
\[
\mathcal C_2(S)=\left(\frac{|S|}{N^d}\right)^{\frac{1}{2}}.
\]
The upper bound is \eqref{eq:intro-support}, and the indicator attains
it. Also, $\mathcal C_r(S)$ is nondecreasing in $r$, because
counting-measure $\ell^r$ norms are nonincreasing in $r$. Thus synthesis
at one exponent implies synthesis at every smaller exponent.

An invertible affine map has the form $T(x)=Ax+b$, where the matrix $A$
is invertible over $\mathbb Z_N$. Direct substitution gives
\begin{equation}
\label{eq:affine-transform}
\widehat{f\circ T}(\xi)
=e^{\frac{2\pi i (A^{-1}b)\cdot\xi}{N}}
\widehat f(A^{-T}\xi).
\end{equation}
Consequently, composition with $T$ preserves every Fourier norm used
here. Modulation also preserves these norms: multiplication of $f(x)$
by $e^{-\frac{2\pi i x\cdot\xi_0}{N}}$ translates its Fourier transform
by $\xi_0$.

\begin{theorem}
\label{thm:affine-symmetry}
Let $\Gamma$ be a group of invertible affine maps of $\mathbb Z_N^d$
that preserves a nonempty set $S$ and acts transitively on it. Then,
for $1\leq r<\infty$,
\begin{equation}
\label{eq:exact-symmetry}
\mathcal C_r(S)
=\frac{N^{-\frac{d}{2}}|S|}{\|\widehat{1_S}\|_r}.
\end{equation}
For $1<r<\infty$, the nonzero functions attaining equality are exactly
the scalar multiples of modulated indicators,
\[
f(x)=c\,e^{\frac{2\pi i x\cdot\xi_0}{N}}1_S(x),
\qquad c\neq0.
\]
\end{theorem}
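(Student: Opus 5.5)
The lower bound is immediate, so most of the work is the upper bound and the equality case. Testing $f=1_S$ gives
\[
\|\widehat{1_S}\|_\infty\geq|\widehat{1_S}(0)|=N^{-\frac d2}|S|,
\]
hence $\mathcal C_r(S)\geq N^{-\frac d2}|S|/\|\widehat{1_S}\|_r$.

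For the upper bound, take $f\neq0$ supported on $S$. By modulation, which preserves every Fourier norm, I may assume the maximum of $|\widehat f|$ is attained at $\xi=0$. Multiplying by a unimodular constant, I may also assume $\widehat f(0)=N^{-\frac d2}\sum_x f(x)>0$. So it suffices to show
\[
\|\widehat f\|_r\geq\frac{\|\widehat{1_S}\|_r}{|S|}\sum_{x\in S}f(x).
\]

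The plan is to average $f$ over $\Gamma$. For each $T\in\Gamma$, the function $f\circ T$ is again supported on $S$, since $T$ is a bijection preserving $S$. By \eqref{eq:affine-transform}, $\|\widehat{f\circ T}\|_r=\|\widehat f\|_r$. Set
\[
g=\frac1{|\Gamma|}\sum_{T\in\Gamma}f\circ T,
\]
noting that $\Gamma$ is finite because the affine group of $\mathbb Z_N^d$ is finite. The triangle inequality in $\ell^r$ gives $\|\widehat g\|_r\leq\|\widehat f\|_r$.

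By transitivity, $g$ is constant on $S$. For $x\in S$, the orbit map $T\mapsto T(x)$ is $|\mathrm{Stab}|$-to-one onto $S$, so
\[
g(x)=\frac{1}{|S|}\sum_{y\in S}f(y)=:c.
\]
Therefore $g=c1_S$, and $\|\widehat g\|_r=|c|\,\|\widehat{1_S}\|_r$. Combining, $\|\widehat f\|_r\geq c\|\widehat{1_S}\|_r$, which is exactly the needed inequality since $c>0$ and $\widehat f(0)=N^{-\frac d2}c|S|$. This proves \eqref{eq:exact-symmetry}.

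Equality for modulated indicators is clear, because modulation and scalars preserve the ratio. For the converse with $1<r<\infty$, I use the strict convexity of $\ell^r$. Equality in the Minkowski step $\|\sum_T\widehat{f\circ T}\|_r\leq\sum_T\|\widehat{f\circ T}\|_r$ forces all the vectors $\widehat{f\circ T}$ to be nonnegative multiples of one another. Since they have equal norms, they are all equal. So $\widehat{f\circ T}=\widehat f$, i.e.\ $f\circ T=f$ for every $T\in\Gamma$, after the normalizing modulation and rotation.

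Transitivity then makes $f$ constant on $S$. Undoing the modulation yields $f=c\,e^{\frac{2\pi i x\cdot\xi_0}{N}}1_S$. The same argument covers any maximizing frequency, because modulation commutes with the whole reduction.

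The main obstacle I expect is this equality case, in particular bookkeeping the modulation. I apply the averaging to the modulated function $e^{-2\pi i x\cdot\xi_0/N}f$, not to $f$ itself, since $f\circ T$ need not attain its maximum at the same frequency. I also have to check that strict convexity applies. This needs all $\widehat{f\circ T}$ nonzero, which holds since $f\neq0$, and the identity $\|\widehat g\|_r=\|\widehat f\|_r$, which follows from equality in the chain.
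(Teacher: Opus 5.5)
Your proof is correct and follows the paper's argument essentially step for step: you modulate so that the relevant Fourier coefficient sits at the origin, and you average over the finite group $\Gamma$ so that transitivity turns the average into a multiple of $1_S$. You then apply the triangle inequality in $\ell^r$ using affine invariance of Fourier norms, and for $1<r<\infty$ you use strict convexity to force $f\circ T=f$. Normalizing the maximizing frequency to $0$ with $\widehat f(0)>0$ at the outset, and checking explicitly that equality in the ratio forces equality in the Minkowski step, are only slightly more detailed bookkeeping of the same proof.
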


\begin{proof}
Fix $f$ supported on $S$ and a frequency $\xi_0$. Modulate $f$ to
obtain $g$ with $\widehat g(0)=\widehat f(\xi_0)$. Average over the
finite group $\Gamma$:
\[
h(x)=\frac{1}{|\Gamma|}\sum_{T\in\Gamma}g(Tx).
\]
If $x\notin S$, every term vanishes. If $x\in S$, transitivity implies
that each point of $S$ occurs equally often among the points $Tx$.
Indeed, the maps taking $x$ to any prescribed point form a coset of the
stabilizer of $x$. Therefore
\[
h=\frac{1}{|S|}\left(\sum_{x\in S}g(x)\right)1_S
=\frac{N^{\frac{d}{2}}}{|S|}\widehat f(\xi_0)1_S.
\]
Using \eqref{eq:affine-transform} and the triangle inequality,
\[
\frac{N^{\frac{d}{2}}}{|S|}|\widehat f(\xi_0)|
\|\widehat{1_S}\|_r
=\|\widehat h\|_r
\leq\frac{1}{|\Gamma|}\sum_{T\in\Gamma}
\|\widehat{g\circ T}\|_r
=\|\widehat f\|_r.
\]
Taking the maximum over $\xi_0$ proves the upper bound in
\eqref{eq:exact-symmetry}. The indicator attains it, since positivity
and the triangle inequality give
$\|\widehat{1_S}\|_\infty=\widehat{1_S}(0)=N^{-\frac{d}{2}}|S|$.

Suppose now that $1<r<\infty$ and $f$ attains equality. Choose
$\xi_0$ at which $|\widehat f|$ is largest. Every inequality in the
averaging argument must then be an equality. The space $\ell^r$ is
strictly convex, and all the vectors $\widehat{g\circ T}$ have the same
norm. Equality for their average forces these vectors to be identical:
if two differed, replacing them by their common average would strictly
decrease their combined contribution to the triangle-inequality bound.
Since the identity map belongs to $\Gamma$, Fourier inversion gives
$g\circ T=g$ for every $T\in\Gamma$. Transitivity makes $g$ constant
on $S$. Undoing the modulation proves the assertion. Conversely, every
function of the displayed form attains the same ratio as $1_S$.
\end{proof}

This argument explains why symmetry is useful. Fixing one Fourier
coefficient fixes the total mass after modulation. Averaging distributes
that mass uniformly over the support without increasing the Fourier
norm. No assumption on the signs or phases of the original weights is
needed.

\begin{lemma}
\label{lem:products}
For nonzero functions $f$ on $\mathbb Z_N^{d_1}$ and $g$ on
$\mathbb Z_N^{d_2}$,
\[
\operatorname{FR}(f\otimes g)=\operatorname{FR}(f)\operatorname{FR}(g),
\qquad (f\otimes g)(x,y)=f(x)g(y).
\]
Thus finite-scale concentration exponents add. Their lower limits
satisfy
\[
\kappa(\{f_N\otimes g_N\})
\geq\kappa(\{f_N\})+\kappa(\{g_N\}),
\]
with equality if both finite-scale exponents converge. For nonempty
sets $S$ and $T$, and $1\leq r<\infty$,
\begin{equation}
\label{eq:product-constants}
\mathcal C_r(S\times T)=\mathcal C_r(S)\mathcal C_r(T).
\end{equation}
\end{lemma}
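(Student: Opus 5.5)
The proof has three parts: the tensor identity for the Fourier ratio, the resulting statement about exponents, and the two inequalities behind \eqref{eq:product-constants}. Only the upper bound $\mathcal C_r(S\times T)\le\mathcal C_r(S)\mathcal C_r(T)$ needs an idea.

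\emph{The Fourier ratio and the exponents.} The unitary transform on $\mathbb Z_N^{d_1+d_2}$ factors, because the character $e^{-2\pi i(x\cdot\xi+y\cdot\eta)/N}$ splits and the normalization $N^{-\frac{d_1+d_2}{2}}$ splits. Hence
\[
\widehat{f\otimes g}=\widehat f\otimes\widehat g.
\]
For counting measure, $\|F\otimes G\|_r=\|F\|_r\|G\|_r$ for every $1\le r<\infty$, and also for $r=\infty$. Applying this with $r=1$ and $r=2$ gives $\operatorname{FR}(f\otimes g)=\operatorname{FR}(f)\operatorname{FR}(g)$. Taking logarithms of
\[
N^{\frac{d_1+d_2}{2}}/\operatorname{FR}(f\otimes g)
=\bigl(N^{\frac{d_1}{2}}/\operatorname{FR}(f)\bigr)\bigl(N^{\frac{d_2}{2}}/\operatorname{FR}(g)\bigr)
\]
shows $\kappa_N(f_N\otimes g_N)=\kappa_N(f_N)+\kappa_N(g_N)$. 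The claims about lower limits are then the elementary facts that $\liminf(a_N+b_N)\ge\liminf a_N+\liminf b_N$, with equality when both sequences converge.

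\emph{Lower bound for the constants.} If $f$ is supported on $S$ and $g$ on $T$, then $f\otimes g$ is supported on $S\times T$. By the same factorization, $\|\widehat{f\otimes g}\|_\infty/\|\widehat{f\otimes g}\|_r$ is the product of the two individual ratios. Taking suprema over $f$ and $g$ separately gives $\mathcal C_r(S\times T)\ge\mathcal C_r(S)\mathcal C_r(T)$.

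\emph{Upper bound for the constants.} A general $F$ supported on $S\times T$ need not be a tensor, so I would use partial Fourier transforms, one variable at a time. Write $\widehat F(\xi,\eta)$ for the full transform.

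\begin{itemize}
\item First fix $\eta$. The function $x\mapsto(\mathcal F_2F)(x,\eta)$, where $\mathcal F_2$ is the transform in $y$ alone, is supported on $S$. Its $d_1$-dimensional transform is $\xi\mapsto\widehat F(\xi,\eta)$. The definition of $\mathcal C_r(S)$ therefore gives, for every $\xi$ and $\eta$,
\[
|\widehat F(\xi,\eta)|\le\mathcal C_r(S)\Bigl(\sum_{\xi'}|\widehat F(\xi',\eta)|^r\Bigr)^{\frac1r}.
\]
\item Next fix $\xi_0$. The function $y\mapsto(\mathcal F_1F)(\xi_0,y)$, where $\mathcal F_1$ is the transform in $x$ alone, is supported on $T$, and its transform is $\eta\mapsto\widehat F(\xi_0,\eta)$. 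Hence
\[
|\widehat F(\xi_0,\eta_0)|\le\mathcal C_r(T)\Bigl(\sum_\eta|\widehat F(\xi_0,\eta)|^r\Bigr)^{\frac1r}.
\]
\item Insert the first bound, with $\xi=\xi_0$, into each term of the sum in the second. The right-hand side becomes at most
\[
\mathcal C_r(T)\,\mathcal C_r(S)\Bigl(\sum_{\eta}\sum_{\xi'}|\widehat F(\xi',\eta)|^r\Bigr)^{\frac1r}
=\mathcal C_r(S)\,\mathcal C_r(T)\,\|\widehat F\|_r .
\]
\end{itemize}
Taking the maximum over $(\xi_0,\eta_0)$ gives $\mathcal C_r(S\times T)\le\mathcal C_r(S)\mathcal C_r(T)$.

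The step I would check most carefully is this last one: that each slice really is supported on the required factor, and that composing the two partial transforms reproduces the full unitary normalization. Both follow directly from the factorization of the characters and of $N^{-\frac{d_1+d_2}{2}}$.
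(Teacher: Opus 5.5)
Your proposal is correct and follows essentially the paper's argument. The tensor factorization of the transform handles the Fourier ratio, the exponents, and the lower bound, and the upper bound applies the one-factor constants on slices in each variable; the differences are cosmetic. You apply the $T$-bound first and then the $S$-bound termwise, which avoids the paper's exchange of $\sup_\eta$ and $\sum_\xi$, and you take suprema directly for the lower bound instead of invoking maximizers by compactness.
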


\begin{proof}
The Fourier transform of a tensor product factors, so its $\ell^1$ and
$\ell^2$ norms factor. This proves the Fourier-ratio identity and the
statements about exponents.

For the upper bound in \eqref{eq:product-constants}, let $F=\widehat f$
with $f$ supported on $S\times T$. For each fixed $\eta$, the function
$F(\cdot,\eta)$ is the Fourier transform in the first variable of a
function supported on $S$. For each fixed $\xi$, the corresponding
statement holds with $T$ in the second variable. Hence
\[
\begin{aligned}
\sup_{\xi,\eta}|F(\xi,\eta)|
&\leq\mathcal C_r(S)
\sup_\eta\left(\sum_\xi|F(\xi,\eta)|^r\right)^{\frac{1}{r}}\\
&\leq\mathcal C_r(S)
\left(\sum_\xi\sup_\eta|F(\xi,\eta)|^r\right)^{\frac{1}{r}}\\
&\leq\mathcal C_r(S)\mathcal C_r(T)
\left(\sum_{\xi,\eta}|F(\xi,\eta)|^r\right)^{\frac{1}{r}}.
\end{aligned}
\]
For the reverse inequality, take tensor products of maximizers for the
two factors. Such maximizers exist because the relevant unit spheres
are compact in finite dimensions.
\end{proof}

\section{Geometric examples over prime moduli}
\label{sec:geometry}

We first examine examples where the Fourier transform can be seen
explicitly. In this section $N$ tends to infinity through odd primes,
unless stated otherwise.

\subsection{Hyperplanes and paraboloids}

For $d\geq2$, let
\[
H_N=\mathbb Z_N^{d-1}\times\{0\},
\qquad
P_N^{(d)}=\{(t,|t|^2):t\in\mathbb Z_N^{d-1}\},
\]
where $|t|^2=t_1^2+\cdots+t_{d-1}^2$ is computed in $\mathbb Z_N$.
Both sets have $N^{d-1}$ elements. Orthogonality gives
\[
\widehat{1_{H_N}}(\xi)=
\begin{cases}
N^{\frac{d}{2}-1},&\xi_1=\cdots=\xi_{d-1}=0,\\
0,&\text{otherwise}.
\end{cases}
\]
There are $N$ nonzero coefficients, all with the same magnitude. It
follows that
\[
\operatorname{FR}(1_{H_N})=N^{\frac{1}{2}},
\qquad
\kappa(\{1_{H_N}\})=\frac{d-1}{2}.
\]
Translations within the hyperplane act transitively, so
Theorem~\ref{thm:affine-symmetry} also gives
\begin{equation}
\label{eq:hyperplane-exact}
\mathcal C_r(H_N)=N^{-\frac{1}{r}}.
\end{equation}
These assertions about $H_N$ hold for every modulus.

For the paraboloid, orthogonality gives the value $N^{\frac{d}{2}-1}$
at the origin and zero at every nonzero frequency with $\xi_d=0$.
When $\xi_d\neq0$, the Fourier transform factors into $d-1$ quadratic
Gauss sums. Each has magnitude $N^{\frac{1}{2}}$, and therefore
\begin{equation}
\label{eq:prime-paraboloid}
|\widehat{1_{P_N^{(d)}}}(\xi)|=
\begin{cases}
N^{\frac{d}{2}-1},&\xi=0,\\
N^{-\frac{1}{2}},&\xi_d\neq0,\\
0,&\text{otherwise}.
\end{cases}
\end{equation}
A proof of the Gauss-sum magnitude for arbitrary odd moduli is given in
Lemma~\ref{lem:gauss}. In particular,
\[
\|\widehat{1_{P_N^{(d)}}}\|_1\asymp N^{d-\frac{1}{2}},
\qquad
\operatorname{FR}(1_{P_N^{(d)}})\asymp N^{\frac{d}{2}},
\]
and the concentration exponent is zero.

For $b\in\mathbb Z_N^{d-1}$, the affine map
\begin{equation}
\label{eq:paraboloid-symmetry}
T_b(x',x_d)=(x'+b,x_d+2b\cdot x'+|b|^2)
\end{equation}
takes $(t,|t|^2)$ to $(t+b,|t+b|^2)$. These maps form a transitive
group of symmetries. Their linear parts have determinant one, so they
are invertible even when the modulus is composite. In the prime case,
\eqref{eq:prime-paraboloid} and Theorem~\ref{thm:affine-symmetry} give
\begin{equation}
\label{eq:prime-exact}
\mathcal C_r(P_N^{(d)})
=\left(1+(N-1)N^{(d-1)(1-\frac{r}{2})}\right)^{-\frac{1}{r}}.
\end{equation}
The term after $1$ tends to infinity precisely when
$r<\frac{2d}{d-1}$. At equality the constant tends to
$2^{-\frac{1}{r}}$, so synthesis fails at the critical exponent.

\subsection{A sphere}

Let
\[
\Sigma_N=\{x\in\mathbb Z_N^d:x_1^2+\cdots+x_d^2=1\},
\qquad d\geq2.
\]
We show that its Fourier ratio has the same power as that of the
paraboloid. Let $\chi$ be the quadratic character, extended by
$\chi(0)=0$, and let
$G=\sum_{t\in\mathbb Z_N}e^{\frac{2\pi it^2}{N}}$. Orthogonality and
completion of the square give, for $\xi\neq0$,
\[
\widehat{1_{\Sigma_N}}(\xi)
=N^{-\frac{d}{2}-1}G^d
\sum_{s\neq0}\chi(s)^d
e^{-\frac{2\pi i}{N}(s+\frac{|\xi|^2}{4s})}.
\]
The Weil bounds for Kloosterman and twisted Kloosterman sums imply
that the final sum has magnitude at most a constant times
$N^{\frac{1}{2}}$ when $|\xi|^2\neq0$; see
\cite{IosevichRudnev,IwaniecKowalski}. When $|\xi|^2=0$, it is an
ordinary additive character sum if $d$ is even and a quadratic Gauss
sum if $d$ is odd. Its magnitude is then $1$ or $N^{\frac{1}{2}}$.
Since $|G|=N^{\frac{1}{2}}$, we obtain
\[
|\widehat{1_{\Sigma_N}}(\xi)|\lesssim N^{-\frac{1}{2}},
\qquad \xi\neq0.
\]
The same orthogonality calculation at zero gives
\[
|\Sigma_N|=N^{d-1}
+\frac{G^d}{N}\sum_{s\neq0}\chi(s)^d e^{-\frac{2\pi i s}{N}}.
\]
The last sum equals $-1$ when $d$ is even and has magnitude
$N^{\frac{1}{2}}$ when $d$ is odd. In particular,
$|\Sigma_N|=N^{d-1}+O(N^{\frac{d-1}{2}})$.
Thus $|\Sigma_N|\asymp N^{d-1}$ and
\[
\sum_{\xi\neq0}|\widehat{1_{\Sigma_N}}(\xi)|^2
=|\Sigma_N|-\frac{|\Sigma_N|^2}{N^d}
\asymp N^{d-1}.
\]
Bounding one factor in this square sum by the largest nonzero
coefficient gives
\[
N^{d-1}\lesssim N^{-\frac{1}{2}}\|\widehat{1_{\Sigma_N}}\|_1.
\]
The reverse estimate
$\|\widehat{1_{\Sigma_N}}\|_1\lesssim N^{d-\frac{1}{2}}$ follows from
Cauchy--Schwarz and Plancherel. Consequently,
\[
\operatorname{FR}(1_{\Sigma_N})\asymp N^{\frac{d}{2}},
\qquad \kappa(\{1_{\Sigma_N}\})=0.
\]

\subsection{Products and a geometric endpoint example}

Products combine the different kinds of Fourier behavior. For example,
if $d_1,d_2\geq2$, then
$\Sigma_N\times H_N\subseteq\mathbb Z_N^{d_1+d_2}$ has
\[
\alpha=d_1+d_2-2,
\qquad
\kappa=\frac{d_2-1}{2}.
\]
The value lies strictly between zero and $\frac{\alpha}{2}$.

A particularly transparent endpoint example is
\[
S_N=\mathbb Z_N^m\times P_N^{(n)},
\qquad d=m+n,\quad m\geq0,\quad n\geq2.
\]
The full-group indicator has Fourier ratio one. Lemma~\ref{lem:products}
therefore gives
\[
\alpha=d-1,
\qquad \kappa=\frac{m}{2},
\qquad p_c(\alpha,\kappa)=\frac{2n}{n-1}.
\]
Writing the frequency as $(\xi,\eta)$, we have
\[
|\widehat{1_{S_N}}(\xi,\eta)|=
\begin{cases}
N^{\frac{d}{2}-1},&(\xi,\eta)=0,\\
N^{\frac{m-1}{2}},&\xi=0,\ \eta_n\neq0,\\
0,&\text{otherwise}.
\end{cases}
\]
At $p_c=\frac{2n}{n-1}$, the contribution of the origin to the
$p_c$-th moment is comparable to the contribution of all remaining
frequencies. Hence
\[
\|\widehat{1_{S_N}}\|_{p_c}
\asymp\|\widehat{1_{S_N}}\|_\infty.
\]
Dividing $1_{S_N}$ by its Fourier $\ell^{p_c}$ norm gives a sequence
with norm one and largest coefficient bounded below. This proves
endpoint failure for these geometric parameters. The construction in
Section~\ref{sec:random} will remove the restrictions on the parameters.

\section{Arithmetic synthesis on paraboloids}
\label{sec:arithmetic}

We now allow all odd moduli. The affine symmetries remain available,
but a quadratic coefficient need no longer be invertible. The divisor
of the modulus shared by that coefficient determines the size of the
Gauss sum. Keeping this divisor information gives the exact synthesis
constant.

\begin{lemma}
\label{lem:gauss}
Let $N$ be odd and let $a,b\in\mathbb Z_N$. Put
$g=\gcd(a,N)$, with $\gcd(0,N)=N$, and $M=\frac{N}{g}$. Then
\[
\left|\frac{1}{N}\sum_{t\bmod N}
e^{-\frac{2\pi i}{N}(at^2+bt)}\right|
=\begin{cases}
M^{-\frac{1}{2}},&g\mid b,\\
0,&g\nmid b.
\end{cases}
\]
\end{lemma}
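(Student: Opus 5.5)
The plan is to reduce to an invertible leading coefficient modulo $M$ and then evaluate the modulus by squaring.

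\textbf{Step 1: reduction to modulus $M$.} Write $a=ga'$ with $\gcd(a',M)=1$; this is possible since $\gcd(a/g,N/g)=1$, and when $a=0$ we have $g=N$, $M=1$. Split $t=u+Mv$ with $u\bmod M$ and $v\bmod g$. Then
\[
\frac{at^2}{N}\equiv\frac{a'u^2}{M}\pmod 1,
\]
so the quadratic term depends only on $u$, while the linear term contributes $e^{-2\pi i b(u+Mv)/N}$. The sum over $v$ is
\[
\sum_{v\bmod g}e^{-\frac{2\pi i bv}{g}},
\]
which equals $g$ if $g\mid b$ and $0$ otherwise. This gives the vanishing case at once. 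When $g\mid b$, write $b=gb'$. The normalized sum becomes
\[
\frac{g}{N}\sum_{u\bmod M}e^{-\frac{2\pi i}{M}(a'u^2+b'u)}=\frac1M\sum_{u\bmod M}e^{-\frac{2\pi i}{M}(a'u^2+b'u)},
\]
with $a'$ a unit modulo the odd number $M$.

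\textbf{Step 2: the invertible case.} It remains to show that $|S|^2=M$, where $S=\sum_{u\bmod M}e(-(a'u^2+b'u)/M)$. We could complete the square using $2^{-1}$ and $a'^{-1}$ modulo $M$, since $M$ is odd, but the squaring argument is cleaner and avoids evaluating Gauss sums altogether. Write
\[
|S|^2=\sum_{u,w}e\!\left(-\frac{a'(u^2-w^2)+b'(u-w)}{M}\right),
\]
and substitute $u=w+h$. The exponent becomes $a'h^2+b'h+2a'hw$. Summing over $w$ gives $M$ when $2a'h\equiv0\pmod M$, and $0$ otherwise. Since $2a'$ is a unit modulo $M$, only $h=0$ survives, so $|S|^2=M$. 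Hence the normalized sum has modulus $M\cdot M^{-1/2}/M=M^{-1/2}$.

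\textbf{Edge cases and main obstacle.} The case $M=1$ is consistent: the sum is $1$, matching $M^{-1/2}=1$. The only delicate step is the decomposition in Step 1. We must check that the quadratic phase $at^2/N$ is well defined modulo $1$ as a function of $u$ alone. This holds because $a(u+Mv)^2-au^2$ is divisible by $aM$, and $aM=a'gM=a'N$ is a multiple of $N$. Oddness of $N$ enters only through the invertibility of $2$ modulo $M$ in Step 2.
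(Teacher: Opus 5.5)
Your proof is correct and follows essentially the paper's argument: the same splitting $t=u+jM$ gives the condition $g\mid b$ and reduces the problem to a sum modulo $M$ with a unit leading coefficient, which is then evaluated by squaring. The only difference is minor. The paper first completes the square and then uses the substitution $h=u-v$, $k=u+v$, whereas you square with the linear term still present and use the shift $u=w+h$, so invertibility of $2a'$ modulo $M$ is needed only once.
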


\begin{proof}
If $a=0$, the assertion is character orthogonality. Otherwise write
$a=ga_0$ and $N=gM$. Every residue $t$ has a unique expression
$t=u+jM$, with $u\bmod M$ and $0\leq j<g$. The sum in $j$ vanishes
unless $g\mid b$. If $b=gb_0$, the original sum equals
\[
g\sum_{u\bmod M}e^{-\frac{2\pi i}{M}(a_0u^2+b_0u)}.
\]
Here $\gcd(a_0,M)=1$. Since $M$ is odd, completing the square removes
the linear term without changing the magnitude. It remains to evaluate
the magnitude of $G_M(a_0)=\sum_u e^{-\frac{2\pi i a_0u^2}{M}}$.
We have
\[
|G_M(a_0)|^2
=\sum_{u,v\bmod M}e^{-\frac{2\pi i a_0(u^2-v^2)}{M}}.
\]
The change of variables $h=u-v$, $k=u+v$ is a bijection because $2$
is invertible. Thus
\[
|G_M(a_0)|^2
=\sum_{h,k\bmod M}e^{-\frac{2\pi i a_0hk}{M}}=M.
\]
Indeed, the inner sum is zero unless $h=0$, in which case it is $M$.
The original sum therefore has magnitude $gM^{\frac{1}{2}}$, and division
by $N=gM$ proves the formula. This is the standard quadratic Gauss-sum
calculation; see also \cite{IrelandRosen}.
\end{proof}

\begin{theorem}
\label{thm:paraboloid-exact}
Let $d\geq2$, let $N$ be odd, and let
$P_N^{(d)}=\{(t,|t|^2):t\in\mathbb Z_N^{d-1}\}$. For
$1\leq r<\infty$, define
\begin{equation}
\label{eq:divisor-profile}
D_{N,d}(r)=\sum_{M\mid N}\varphi(M)M^{(d-1)(1-\frac{r}{2})}.
\end{equation}
Then
\begin{equation}
\label{eq:exact-divisor-constant}
\mathcal C_r(P_N^{(d)})=D_{N,d}(r)^{-\frac{1}{r}}.
\end{equation}
In particular, uniform synthesis along any family of odd moduli holds
at exponent $r$ if and only if $D_{N,d}(r)\to\infty$ along that family.
\end{theorem}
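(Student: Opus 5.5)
Our plan is to apply Theorem~\ref{thm:affine-symmetry} with the transitive group of maps $T_b$ from \eqref{eq:paraboloid-symmetry}. As noted there, these maps have unipotent linear parts and so are invertible for every odd $N$. This gives
\[
\mathcal C_r(P_N^{(d)})=\frac{N^{-\frac d2}N^{d-1}}{\|\widehat{1_{P}}\|_r},
\]
so everything reduces to computing the moments $\|\widehat{1_P}\|_r^r$ exactly. We will aim to show
\[
\|\widehat{1_P}\|_r^r=N^{(\frac d2-1)r}D_{N,d}(r),
\]
from which \eqref{eq:exact-divisor-constant} follows at once.

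For the moment computation, write $\xi=(\xi',\xi_d)$. Then
\[
\widehat{1_P}(\xi)=N^{-\frac d2}\prod_{j=1}^{d-1}\sum_{t\bmod N}e^{-\frac{2\pi i}{N}(\xi_dt^2+\xi_jt)}.
\]
Each factor is $N$ times the normalized sum in Lemma~\ref{lem:gauss} with $a=\xi_d$ and $b=\xi_j$. Put $g=\gcd(\xi_d,N)$ and $M=N/g$. The lemma gives
\[
|\widehat{1_P}(\xi)|=N^{-\frac d2}N^{d-1}M^{-\frac{d-1}{2}}
\]
when $g\mid\xi_j$ for every $j$, and zero otherwise. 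We then organize the sum over $\xi$ by the divisor $M\mid N$. The number of $\xi_d\in\mathbb Z_N$ with $N/\gcd(\xi_d,N)=M$ is $\varphi(M)$, which covers the case $\xi_d=0$ as $M=1$. For each such $\xi_d$, the number of admissible $\xi'$ is $(N/g)^{d-1}=M^{d-1}$. Summing then gives
\[
\|\widehat{1_P}\|_r^r=N^{(\frac d2-1)r}\sum_{M\mid N}\varphi(M)M^{d-1}M^{-\frac{(d-1)r}{2}},
\]
which is the claimed divisor sum.

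The final assertion then follows from the equivalence, noted in Section~\ref{sec:symmetry}, between uniform synthesis and $\mathcal C_r(S_N)\to0$. The only point needing care is the bookkeeping of normalizations in the Gauss-sum factorization, namely the factor $N^{-d/2}$, the $N^{d-1}$ coming from $N$ per coordinate, and the $M=1$ term reproducing $\widehat{1_P}(0)$. We expect no real obstacle beyond checking these constants and the divisibility count.
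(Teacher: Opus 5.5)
Your proposal is correct and follows the paper's proof essentially step for step. Like the paper, you use affine averaging via Theorem~\ref{thm:affine-symmetry} with the transitive group $\{T_b\}$, apply Lemma~\ref{lem:gauss} in each coordinate, and count $\varphi(M)$ values of $\xi_d$ and $M^{d-1}$ admissible $\xi'$ for each divisor $M\mid N$. The only cosmetic difference is that the paper first divides $\widehat{1_{P_N^{(d)}}}$ by its value $N^{\frac{d}{2}-1}$ at the origin, whereas you carry the factor $N^{(\frac{d}{2}-1)r}$ through and cancel it at the end.
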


\begin{proof}
Normalize the indicator transform by its value at the origin:
\[
F_N(\xi)=\frac{\widehat{1_{P_N^{(d)}}}(\xi)}{N^{\frac{d}{2}-1}}
=\frac{1}{N^{d-1}}\sum_{t\in\mathbb Z_N^{d-1}}
e^{-\frac{2\pi i}{N}(\xi'\cdot t+\xi_d|t|^2)}.
\]
For a fixed $\xi_d$, put $g=\gcd(\xi_d,N)$ and $M=\frac{N}{g}$.
Lemma~\ref{lem:gauss}, applied in each coordinate, gives
\[
|F_N(\xi)|=
\begin{cases}
M^{-\frac{d-1}{2}},&g\mid\xi_j\text{ for }1\leq j\leq d-1,\\
0,&\text{otherwise}.
\end{cases}
\]
There are $\varphi(M)$ choices of $\xi_d$ with this value of $M$ and
$M^{d-1}$ choices of $\xi'$ satisfying the divisibility condition.
This includes the origin when $M=1$, where $\varphi(1)=1$. Therefore
\[
\sum_\xi|F_N(\xi)|^r
=\sum_{M\mid N}\varphi(M)M^{d-1}M^{-\frac{r(d-1)}{2}}
=D_{N,d}(r).
\]
The maps in \eqref{eq:paraboloid-symmetry} act transitively on the
paraboloid. Formula~\eqref{eq:exact-symmetry} now gives
$\mathcal C_r(P_N^{(d)})=\|F_N\|_r^{-1}$, which is
\eqref{eq:exact-divisor-constant}. The equivalence with synthesis follows
from the definition of $\mathcal C_r$.
\end{proof}

The divisor sum in \eqref{eq:divisor-profile} is the constant-function
moment considered in Section~2 of Hickman and Wright
\cite{HickmanWright}. Theorem~\ref{thm:paraboloid-exact} identifies its
reciprocal power as the optimal synthesis constant for every supported
function. This is a lower bound for a Fourier norm after one coefficient
has been fixed. An extension estimate, by contrast, bounds the Fourier
norm above in terms of a norm of the weights.

\subsection{The same concentration exponent for all odd moduli}

\begin{proposition}
\label{prop:odd-kappa}
As $N$ tends to infinity through any family of odd moduli,
\[
\operatorname{FR}(1_{P_N^{(d)}})=N^{\frac{d}{2}-o(1)},
\qquad
\kappa_N(1_{P_N^{(d)}})\longrightarrow0.
\]
\end{proposition}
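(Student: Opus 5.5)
We compute the Fourier ratio directly from the exact profile obtained in the proof of Theorem~\ref{thm:paraboloid-exact}. With $F_N=\widehat{1_{P_N^{(d)}}}/N^{\frac{d}{2}-1}$, that proof shows that $|F_N|$ takes the value $M^{-\frac{d-1}{2}}$ on exactly $\varphi(M)M^{d-1}$ frequencies for each divisor $M\mid N$, and vanishes elsewhere. Since $\operatorname{FR}$ is invariant under scaling,
\[
\operatorname{FR}(1_{P_N^{(d)}})=\frac{\|F_N\|_1}{\|F_N\|_2}
=\frac{D_{N,d}(1)}{D_{N,d}(2)^{\frac{1}{2}}},
\]
where $D_{N,d}(1)=\sum_{M\mid N}\varphi(M)M^{\frac{d-1}{2}}$ and $D_{N,d}(2)=\sum_{M\mid N}\varphi(M)=N$. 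Thus
\[
\operatorname{FR}(1_{P_N^{(d)}})=N^{-\frac{1}{2}}\sum_{M\mid N}\varphi(M)M^{\frac{d-1}{2}}.
\]
By Lemma~\ref{lem:support-fr} we already have $\operatorname{FR}\leq N^{\frac{d}{2}}$. It remains to prove a lower bound of the form $N^{\frac{d}{2}-o(1)}$.

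For the lower bound, keep only the term $M=N$:
\[
\operatorname{FR}(1_{P_N^{(d)}})\geq N^{-\frac{1}{2}}\varphi(N)N^{\frac{d-1}{2}}
=N^{\frac{d}{2}-1}\varphi(N).
\]
The classical estimate $\varphi(N)\gg N/\log\log N$ gives $\varphi(N)=N^{1-o(1)}$. If one prefers to avoid that estimate, the elementary bound $\varphi(N)\geq N\prod_{p\mid N}(1-\frac1p)\geq N/2^{\omega(N)}$ suffices, together with $2^{\omega(N)}\leq\tau(N)=N^{o(1)}$ from the divisor bound. Either way $\operatorname{FR}(1_{P_N^{(d)}})\geq N^{\frac{d}{2}-o(1)}$.

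Taking logarithms then gives $0\leq\kappa_N(1_{P_N^{(d)}})\leq\frac{\log(N/\varphi(N))}{\log N}\to0$. The only nontrivial input is the subpolynomial size of $N/\varphi(N)$. This is the step to cite carefully; the estimate $N/\varphi(N)\lesssim\log\log N$ is the cleanest source, and everything else follows from the divisor profile already computed.
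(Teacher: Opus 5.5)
Your proof is correct and follows the paper's argument: you use the same divisor-profile formula $\operatorname{FR}(1_{P_N^{(d)}})=N^{-\frac{1}{2}}\sum_{M\mid N}\varphi(M)M^{\frac{d-1}{2}}$, the same lower bound from the term $M=N$, the same Cauchy--Schwarz upper bound, and the same reduction to $\varphi(N)=N^{1-o(1)}$ through $\varphi(N)\geq N2^{-\omega(N)}$. The only difference is in bounding $2^{\omega(N)}$: the paper proves $\omega(N)=O(\frac{\log N}{\log\log N})$ directly from $(\omega(N)+1)!\leq N$, while you cite the divisor bound $\tau(N)=N^{o(1)}$ or the estimate $\varphi(N)\gg N/\log\log N$.
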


\begin{proof}
The moment calculation at $r=1$ and Plancherel give
\[
\operatorname{FR}(1_{P_N^{(d)}})
=N^{-\frac{1}{2}}\sum_{M\mid N}\varphi(M)M^{\frac{d-1}{2}}.
\]
The term $M=N$ gives the lower bound
$\varphi(N)N^{\frac{d-2}{2}}$. The upper bound $N^{\frac{d}{2}}$ is
Cauchy--Schwarz. It remains to recall that
$\varphi(N)=N^{1-o(1)}$.

Here is an elementary bound sufficient for this purpose. Let $\omega(N)$
be the number of distinct prime divisors. If $\omega(N)=k$, their
product is at least $(k+1)!$, since the $j$-th smallest distinct prime
is at least $j+1$. The elementary estimate
$\log((k+1)!)\gtrsim k\log(k+1)$ gives
$\omega(N)=O(\frac{\log N}{\log\log N})$. Since
\[
\frac{\varphi(N)}{N}=\prod_{\ell\mid N}\left(1-\frac{1}{\ell}\right)
\geq2^{-\omega(N)},
\]
we obtain the required estimate. The two bounds for the Fourier ratio
then prove the proposition.
\end{proof}

In dimension two, this exponent does not control pointwise decay away
from zero. Fix an odd prime $\ell$ and put $N=\ell^k$. At the nonzero
frequency $(0,\ell^{k-1})$,
\[
\widehat{1_{P_N^{(2)}}}(0,\ell^{k-1})
=\frac{1}{\ell}\sum_{t\bmod\ell}e^{-\frac{2\pi it^2}{\ell}},
\]
whose magnitude is $\ell^{-\frac{1}{2}}$. This does not tend to zero.
Along primes, every coefficient with nonzero second coordinate has
magnitude $N^{-\frac{1}{2}}$. The Fourier-ratio exponent is zero in both
cases.

\subsection{Prime powers and the critical exponent}

Set
\[
r_c=\frac{2d}{d-1}.
\]
This is the universal critical exponent for the paraboloid parameters
$\alpha=d-1$ and $\kappa=0$.

\begin{corollary}
\label{cor:prime-power-endpoint}
Fix an odd prime $\ell$, and let $N=\ell^k$. Uniform synthesis on
$P_N^{(d)}$ holds exactly for $1\leq r\leq r_c$. More precisely,
\[
\mathcal C_r(P_{\ell^k}^{(d)})\asymp_{\ell,d,r}
\begin{cases}
N^{\frac{d-1}{2}-\frac{d}{r}},&1\leq r<r_c,\\
(\log N)^{-\frac{1}{r_c}},&r=r_c,\\
1,&r>r_c.
\end{cases}
\]
At the endpoint the exact formula is
\begin{equation}
\label{eq:prime-power-critical}
\mathcal C_{r_c}(P_{\ell^k}^{(d)})
=\left(1+k\left(1-\frac{1}{\ell}\right)\right)^{-\frac{1}{r_c}}.
\end{equation}
\end{corollary}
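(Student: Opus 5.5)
The plan is to specialize Theorem~\ref{thm:paraboloid-exact} to $N=\ell^k$ and evaluate the divisor sum $D_{N,d}(r)$ explicitly. The divisors of $N$ are $M=\ell^j$ for $0\leq j\leq k$, with $\varphi(1)=1$ and $\varphi(\ell^j)=\ell^{j}(1-\frac{1}{\ell})$ for $j\geq1$. Writing $\beta=(d-1)(1-\frac{r}{2})$, this gives
\[
D_{\ell^k,d}(r)=1+\left(1-\frac{1}{\ell}\right)\sum_{j=1}^{k}\ell^{j(1+\beta)}.
\]
Since $\mathcal C_r=D^{-1/r}$, everything reduces to the geometric sum with ratio $q=\ell^{1+\beta}$.

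The key observation is that $1+\beta=0$ exactly when $(d-1)(\frac{r}{2}-1)=1$, that is, when $r=\frac{2d}{d-1}=r_c$. I would then split into three cases according to the sign of $1+\beta$.

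\emph{The endpoint $r=r_c$.} Here every term of the sum equals $1$, so
\[
D_{\ell^k,d}(r_c)=1+k\left(1-\frac{1}{\ell}\right).
\]
This is \eqref{eq:prime-power-critical}. Since $k=\log N/\log\ell$, the bracket is comparable to $\log N$ with constants depending on $\ell$, which gives the $(\log N)^{-1/r_c}$ asymptotic.

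\emph{The range $r<r_c$.} Now $q>1$, and the sum is comparable to its top term $q^k=N^{1+\beta}$, with constants depending on $\ell,d,r$. The leading $1$ is dominated because $q^k\geq1$. Raising to the power $-\frac{1}{r}$ gives the exponent
\[
-\frac{1+\beta}{r}=-\frac{1}{r}-\frac{d-1}{r}+\frac{d-1}{2}=\frac{d-1}{2}-\frac{d}{r},
\]
as claimed.

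\emph{The range $r>r_c$.} Now $q<1$, so the sum is bounded by the convergent series $\frac{q}{1-q}$. Hence $1\leq D\lesssim_{\ell,d,r}1$ and $\mathcal C_r\asymp1$.

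The synthesis range then follows from the equivalence in Theorem~\ref{thm:paraboloid-exact}. We have $D\to\infty$ for $r<r_c$, where the exponent $1+\beta$ is positive, and also at $r=r_c$, because $k\to\infty$. For $r>r_c$, $D$ stays bounded.

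There is no real obstacle. The only points needing care are checking that the critical value of $1+\beta$ matches $r_c$, and tracking that the implied constants depend only on $\ell$, $d$ and $r$, since the ratio $q$ depends on $\ell$.
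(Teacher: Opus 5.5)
Your proposal is correct and follows the paper's proof: specialize $D_{N,d}(r)$ from Theorem~\ref{thm:paraboloid-exact} to the divisors $\ell^j$ of $N=\ell^k$, note that the geometric ratio $\ell^{1+\beta}$ (the paper writes $\ell^{1-s}$ with $s=-\beta$) equals one exactly at $r=r_c$, and treat $r<r_c$, $r=r_c$, $r>r_c$ separately. Your computation of the exponent $\frac{d-1}{2}-\frac{d}{r}$ and of the endpoint value $1+k(1-\frac{1}{\ell})$, and your tracking of the $\ell$-dependence of the implied constants, all match the paper.
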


\begin{proof}
Write $s=(d-1)(\frac{r}{2}-1)$. Since
$\varphi(\ell^j)=(1-\frac{1}{\ell})\ell^j$ for $j\geq1$,
\[
D_{\ell^k,d}(r)
=1+\left(1-\frac{1}{\ell}\right)\sum_{j=1}^k\ell^{j(1-s)}.
\]
For $r<r_c$, one has $s<1$, and the geometric sum is comparable to
$N^{1-s}$. Its reciprocal $r$-th power has exponent
$-\frac{1-s}{r}=\frac{d-1}{2}-\frac{d}{r}$. At $r=r_c$, one has
$s=1$, and every summand equals one. For $r>r_c$, the geometric sum
is bounded above independently of $k$ and is at least one after the
initial term is included. Theorem~\ref{thm:paraboloid-exact} proves all
the assertions.
\end{proof}

For a general odd modulus, multiplicativity gives
\begin{equation}
\label{eq:endpoint-product}
D_{N,d}(r_c)
=\sum_{M\mid N}\frac{\varphi(M)}{M}
=\prod_{\ell^a\parallel N}
\left(1+a\left(1-\frac{1}{\ell}\right)\right).
\end{equation}
If $\tau(N)=\prod_{\ell^a\parallel N}(a+1)$ is the number of positive
divisors, then
\begin{equation}
\label{eq:tau-comparison}
\tau(N)^{\frac{2}{3}}\leq D_{N,d}(r_c)\leq\tau(N).
\end{equation}
Indeed, $\ell\geq3$ gives
$1+a(1-\frac{1}{\ell})\geq1+\frac{2}{3}a\geq(1+a)^{\frac{2}{3}}$; the last
inequality is the tangent-line bound for the concave function
$x^{\frac{2}{3}}$. The upper bound is immediate. We have therefore proved
an exact arithmetic criterion: endpoint synthesis holds along a family
of odd moduli if and only if $\tau(N)\to\infty$ along that family.

\subsection{Products of distinct primes}

The number of prime factors can influence synthesis beyond $r_c$.
Let
\[
N_Y=\prod_{\substack{3\leq\ell\leq Y\\ \ell\text{ prime}}}\ell,
\qquad
r_* =\frac{2(d+1)}{d-1}.
\]
The next result shows that this family reaches a larger exponent.

\begin{corollary}
\label{cor:primorial}
Along $N=N_Y$, uniform synthesis on $P_N^{(d)}$ holds exactly for
$1\leq r\leq r_*$. For every fixed $r>r_*$, there is a constant
$c_{d,r}>0$ such that
\[
\mathcal C_r(P_N^{(d)})\geq c_{d,r}
\]
for every odd $N$. Thus no family of odd moduli can yield uniform
synthesis on these paraboloids at an exponent greater than $r_*$.
\end{corollary}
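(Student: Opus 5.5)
By Theorem~\ref{thm:paraboloid-exact}, everything reduces to the divisor sum $D_{N,d}(r)$ in \eqref{eq:divisor-profile}. We must show two things. First, $D_{N_Y,d}(r)\to\infty$ for every $r\le r_*$. Second, $D_{N,d}(r)$ is bounded above uniformly over odd $N$ whenever $r>r_*$. Write $s=(d-1)(\frac r2-1)$, so that the summand is $\varphi(M)M^{-s}$. Note that $r=r_*$ corresponds exactly to $s=2$. Since $\varphi$ is multiplicative, $D_{N,d}(r)$ factors over prime powers:
\[
D_{N,d}(r)=\prod_{\ell^a\parallel N}\Bigl(1+\bigl(1-\tfrac1\ell\bigr)\sum_{j=1}^a\ell^{j(1-s)}\Bigr).
\]

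For the upper bound when $r>r_*$, i.e.\ $s>2$, I would bound each local factor by
\[
1+\sum_{j\ge1}\ell^{j(1-s)}=1+\frac{\ell^{1-s}}{1-\ell^{1-s}}\le 1+C_s\,\ell^{1-s},
\]
where $C_s$ is uniform because $\ell\ge3$ and $s>1$. The product over all primes $\ell$ is then at most $\exp\bigl(C_s\sum_\ell \ell^{1-s}\bigr)$. This sum converges because $s-1>1$. This gives a bound $D_{N,d}(r)\le B_{d,r}$ independent of $N$. Setting $c_{d,r}=B_{d,r}^{-1/r}$ and applying \eqref{eq:exact-divisor-constant} yields $\mathcal C_r(P_N^{(d)})\ge c_{d,r}$. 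Hence synthesis fails along every family of odd moduli.

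For synthesis along $N_Y$ with $r\le r_*$, monotonicity helps. The constant $\mathcal C_r$ is nondecreasing in $r$, equivalently $D$ is nonincreasing in $r$. So it suffices to treat the endpoint $r=r_*$, where $s=2$. Since $N_Y$ is squarefree, every exponent is $a=1$ and each factor equals $1+(1-\frac1\ell)\ell^{-1}\ge 1+\frac{2}{3\ell}$. Then
\[
\log D_{N_Y,d}(r_*)\ge\sum_{3\le\ell\le Y}\log\Bigl(1+\frac{2}{3\ell}\Bigr)\gtrsim\sum_{3\le\ell\le Y}\frac1\ell\to\infty.
\]
This uses divergence of the sum of reciprocals of primes. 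Only divergence is needed, so Euler's classical result, or Mertens' $\log\log Y$ asymptotic, suffices. Consequently $D_{N_Y,d}(r_*)\to\infty$, and Theorem~\ref{thm:paraboloid-exact} gives synthesis for every $r\le r_*$, the endpoint included.

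The only genuinely nonelementary input is the divergence of $\sum 1/\ell$, and it is classical. The main point requiring care is the uniformity of the upper bound for $r>r_*$. It relies on the bound on local factors holding uniformly in both the prime $\ell$ and the exponent $a$, which is why I sum the geometric series all the way to infinity. Exactness of the range then follows: synthesis holds for $r\le r_*$ and fails for every $r>r_*$.
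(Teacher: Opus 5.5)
Your proof is correct and follows the paper's argument. The paper also writes $D_{N_Y,d}(r)$ as a product over the primes, bounds each factor below by $1+\frac{2}{3\ell}$ whenever $s\le 2$ (directly for all $r\le r_*$, rather than first reducing to the endpoint), and uses the divergence of $\sum_\ell\frac{1}{\ell}$. For $s>2$ it skips the Euler product and uses $\varphi(M)\le M$ to get $D_{N,d}(r)\le\sum_{M\ge1}M^{1-s}<\infty$, which is quicker than your local-factor bound but gives the same uniform bound. One small point: your ``equivalently'' is not literally an equivalence, since $\mathcal C_r=D_{N,d}(r)^{-1/r}$, but this is harmless because $D_{N,d}(r)$ is termwise nonincreasing in $r$.
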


\begin{proof}
Again put $s=(d-1)(\frac{r}{2}-1)$. For squarefree $N_Y$,
\[
D_{N_Y,d}(r)
=\prod_{\substack{3\leq\ell\leq Y\\ \ell\text{ prime}}}
\left(1+\left(1-\frac{1}{\ell}\right)\ell^{1-s}\right).
\]
If $r\leq r_*$, then $s\leq2$, so each factor is at least
$1+\frac{2}{3\ell}$. The sum of reciprocals of the primes diverges,
and hence this product tends to infinity.

For clarity, the needed divergence has an elementary proof. If
$\sum_\ell\frac{1}{\ell}$ converged, the products
$\prod_{\ell\leq Y}(1-\frac{1}{\ell})^{-1}$ would be bounded, because
$-\log(1-\frac{1}{\ell})\leq\frac{1}{\ell}+C\ell^{-2}$. Expanding each
finite product as geometric series shows that it is at least
$\sum_{n\leq Y}\frac{1}{n}$, since every integer $n\leq Y$ has all its
prime divisors at most $Y$. This contradicts the divergence of the
harmonic series. Removing the prime $2$ does not change the conclusion.
Also, $\log(1+\frac{2}{3\ell})\gtrsim\frac{1}{\ell}$, which proves the
claimed product divergence.

If $r>r_*$, then $s>2$. For every $N$,
\[
D_{N,d}(r)=\sum_{M\mid N}\varphi(M)M^{-s}
\leq\sum_{M=1}^{\infty}M^{1-s}<\infty.
\]
The upper bound is independent of $N$. Taking its reciprocal $r$-th
power and using Theorem~\ref{thm:paraboloid-exact} completes the proof.
\end{proof}

In dimension two, $r_c=4$ and $r_*=6$. At the larger endpoint,
\[
D_{N_Y,2}(6)
=\prod_{\substack{3\leq\ell\leq Y\\ \ell\text{ prime}}}
\left(1+\frac{1}{\ell}-\frac{1}{\ell^2}\right).
\]
Its divergence is slow, but divergence is exactly what synthesis
requires. The distinction between bounded divisor sums over prime
powers and over general moduli is already present in
\cite[Example 2.1]{HickmanWright}. The affine averaging theorem turns
that distinction into a complete answer for the synthesis problem on
these families.

\section{Random constructions and sharpness for every pair}
\label{sec:random}

The geometric examples make the mechanism visible, but their parameters
are restricted by the dimensions of their factors. To obtain arbitrary
parameters, we use randomness in a quotient group. We first prove the
moment estimate needed for the construction. It also gives a simple
source of sets with no Fourier-ratio concentration.

\begin{lemma}
\label{lem:random-moments}
Let $G$ be a finite abelian group of cardinality $L\geq2$. Choose $R$
uniformly among its $M$-element subsets, where $1\leq M<L$. For every
nontrivial character $\chi$ of $G$ and every $r>0$,
\begin{equation}
\label{eq:random-moment}
\mathbb E\left|\sum_{x\in R}\chi(x)\right|^r
\leq C_r M^{\frac{r}{2}},
\end{equation}
where $C_r$ is independent of $G$, $M$, and $\chi$.
\end{lemma}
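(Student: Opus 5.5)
The plan is to compare sampling without replacement to independent Bernoulli selection, and then apply a standard moment bound for sums of independent bounded centered variables. First, reduce to even integer moments: by H\"older (Lyapunov) it suffices to treat $r=2m$ with $m$ a positive integer, since for $r\le 2m$ we have $\mathbb E|X|^r\le(\mathbb E|X|^{2m})^{r/2m}$. This gives $C_r\le C_{2m}^{r/2m}$ with $m=\lceil r/2\rceil$, and the power of $M$ is correct.

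Write $X=\sum_{x\in R}\chi(x)=\sum_{x\in G}\chi(x)\,\mathbf 1[x\in R]$. Because $\chi$ is nontrivial, $\sum_{x\in G}\chi(x)=0$, so
\[
X=\sum_{x\in G}\chi(x)\Bigl(\mathbf 1[x\in R]-\tfrac{M}{L}\Bigr),
\]
a sum of centered terms. I would then invoke Hoeffding's comparison principle: for a uniformly random $M$-subset, $\mathbb E\,\phi(\sum_{x\in R}a_x)\le\mathbb E\,\phi(\sum_{j=1}^M a_{Y_j})$ for every convex continuous $\phi$, where $Y_1,\dots,Y_M$ are drawn independently and uniformly \emph{with} replacement. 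Hoeffding states this for real values, but the proof (the without-replacement sum is a conditional expectation of the with-replacement sum, followed by Jensen) works equally for complex vectors. I apply it with $\phi(z)=|z|^{2m}$, which is convex on $\mathbb C\cong\mathbb R^2$. The comparison sum is $\sum_{j=1}^M\chi(Y_j)$, whose summands are independent, bounded by $1$, and have mean $L^{-1}\sum_x\chi(x)=0$.

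For independent centered complex variables $Z_j$ with $|Z_j|\le1$, expanding $\mathbb E|\sum_j Z_j|^{2m}$ gives a sum over $2m$-tuples of indices. Any tuple in which some index appears exactly once contributes zero. The surviving tuples use at most $m$ distinct indices, so there are at most $c_m M^m$ of them, each contributing at most $1$. Alternatively, apply Rosenthal's or the Marcinkiewicz--Zygmund inequality to the real and imaginary parts separately. Either way, $\mathbb E|X|^{2m}\le c_m M^m$ with $c_m$ depending only on $m$, which is \eqref{eq:random-moment}.

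The main obstacle is the justification of the comparison step, specifically verifying that Hoeffding's conditional-expectation argument applies to complex-valued summands. If it does not go through cleanly, I would fall back on a direct computation. Expand $\mathbb E|X|^{2m}$ over tuples $x_1,\dots,x_{2m}$, and use that the inclusion probability of a set of $k$ distinct points is $\binom{L-k}{M-k}/\binom{L}{M}\le (M/L)^k$. Tuples with a singleton index cannot then be discarded outright, but they can be handled with the centered form above. For instance, note that the joint cumulants of the centered indicators are $O_k((M/L)^{k}/L)$ corrections when $M\le L/2$, and pass to the complement $G\setminus R$, whose sum is $-X$, when $M>L/2$. This route is messier, so the Hoeffding comparison is preferred.
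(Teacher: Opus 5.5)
Your proof is correct, but it takes a different route from the paper.

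\textbf{The paper's route.} The paper stays in the without-replacement setting throughout. It treats the real and imaginary parts separately and uses the Doob martingale $Z_j=\frac{L-M}{L-j}S_j$ of the sequential draws. The increments are conditionally centered and bounded by $2$, which gives $\mathbb E e^{tS_M}\le e^{2Mt^2}$. From this the paper deduces the sub-Gaussian tail $\mathbb P(|\sum_{x\in R}\chi(x)|>u)\le4e^{-u^2/(16M)}$, and integrates it to obtain every moment $r>0$ at once.

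\textbf{Your route.} You remove the dependence with Hoeffding's convex-order comparison. You then do an elementary even-moment count for independent centered unimodular summands, and recover general $r$ by Lyapunov.

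\textbf{The step you flagged.} The comparison does hold for complex values. Draw i.i.d.\ uniform $Y_1,\dots,Y_M$. Enlarge the set of their distinct values by a uniformly random set of further points until it has $M$ elements, and call the result $R$. The construction is invariant under all permutations of $G$. This shows that $R$ is a uniform $M$-subset. It also shows that, given $R$, the multiplicities of its points among the $Y_j$ are exchangeable and sum to $M=|R|$, so each has conditional mean $1$. Hence $\mathbb E[\sum_j\chi(Y_j)\mid R]=\sum_{x\in R}\chi(x)$. Conditional Jensen for the convex function $|z|^{2m}$ on $\mathbb C\cong\mathbb R^2$ then gives the comparison. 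With this in place, your centering of the indicators and the cumulant fallback are unnecessary.

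\textbf{Comparison.} Your route is shorter once the comparison principle is granted, and it isolates the only place where dependence matters. The paper's route is self-contained and needs no reduction to even moments. It also gives the stronger conclusion of a sub-Gaussian tail rather than moment bounds alone.
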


\begin{proof}
We give the sampling argument to make clear that no field structure is
required. Recall that a character is a homomorphism from $G$ to the
unit circle. A nontrivial character has sum zero: choose $y$ with
$\chi(y)\neq1$ and translate the sum to obtain
$\sum_x\chi(x)=\chi(y)\sum_x\chi(x)$.
Let $a(x)$ be either the real or the imaginary part of
$\chi(x)$. Then $a(x)\in[-1,1]$ and $\sum_{x\in G}a(x)=0$.
Choose distinct points $X_1,\ldots,X_M$ successively and uniformly, and
write $S_j=\sum_{i=1}^j a(X_i)$. The conditional expectation of the
final sum after the first $j$ choices is
\[
Z_j=\mathbb E(S_M\mid X_1,\ldots,X_j)
=\frac{L-M}{L-j}S_j,
\qquad 0\leq j\leq M.
\]
The equality follows because the sum over the remaining population is
$-S_j$. In particular, $Z_0=0$ and $Z_M=S_M$.

If $\mu_{j-1}$ denotes the average of $a$ over the population remaining
before the $j$-th choice, direct subtraction gives
\[
Z_j-Z_{j-1}
=\frac{L-M}{L-j}\bigl(a(X_j)-\mu_{j-1}\bigr).
\]
This difference has conditional mean zero and absolute value at most
$2$, since $\frac{L-M}{L-j}\leq1$. A mean-zero random variable $Y$
with $|Y|\leq2$ satisfies
\[
\mathbb E e^{tY}\leq\cosh(2t)\leq e^{2t^2}.
\]
For the first inequality, bound the exponential on $[-2,2]$ by its
secant line and take expectations. For the second, compare the power
series using $(2j)!\geq2^j j!$. Applying this estimate conditionally
and iterating yields $\mathbb E e^{tS_M}\leq e^{2Mt^2}$.
Markov's inequality, optimized at $t=\frac{u}{4M}$, now gives
\[
\mathbb P(|S_M|>u)\leq2e^{-\frac{u^2}{8M}}.
\]
If a complex number has magnitude greater than $u$, one of its real
and imaginary parts has magnitude greater than $\frac{u}{\sqrt2}$.
Consequently,
\[
\mathbb P\left(\left|\sum_{x\in R}\chi(x)\right|>u\right)
\leq4e^{-\frac{u^2}{16M}}.
\]
Finally, the identity
$\mathbb E W^r=r\int_0^\infty u^{r-1}\mathbb P(W>u)\,du$ for
$W\geq0$, followed by the change of variables $u=M^{\frac{1}{2}}v$,
proves \eqref{eq:random-moment}. This is a standard form of
concentration for sampling without replacement; see \cite{Serfling}.
\end{proof}

\subsection{Endpoint counterexamples in a prescribed dimension}

The choice of parameters is easiest to understand before making the
construction. A union of cosets of a subgroup of size $h$ has Fourier
transform supported on at most $\frac{N^d}{h}$ frequencies. To arrange
a Fourier ratio of size $N^{\frac{d}{2}-\kappa}$, we therefore choose
$h$ of size $N^{2\kappa}$. The desired support size then requires
about $N^{\alpha-2\kappa}$ cosets. Randomness will ensure that the
Fourier transform occupies these available frequencies in the norm
sense needed for sharpness.

We need one elementary fact about subgroups. If $N=\ell^k$ and
$0\leq b\leq dk$ is an integer, then $\mathbb Z_N^d$ has a subgroup
of cardinality $\ell^b$. To see this, write
$b=b_1+\cdots+b_d$ with $0\leq b_j\leq k$ and use the product of
the coordinate subgroups $\ell^{k-b_j}\mathbb Z_N$, each of which has
$\ell^{b_j}$ elements.

\begin{theorem}
\label{thm:all-pair-sharpness}
Fix $d\geq1$, an odd prime $\ell$, and parameters
\[
0<\alpha<d,
\qquad 0\leq\kappa<\frac{\alpha}{2}.
\]
Along $N=\ell^k$, there are sets $S_N\subseteq\mathbb Z_N^d$ with
\[
|S_N|\asymp N^\alpha,
\qquad
\operatorname{FR}(1_{S_N})\asymp N^{\frac{d}{2}-\kappa},
\]
and scalar multiples $f_N$ of $1_{S_N}$ such that
\[
\|\widehat f_N\|_\infty=1,
\qquad
\sup_N\|\widehat f_N\|_{p_c}<\infty,
\qquad
p_c=\frac{2(d-2\kappa)}{\alpha-2\kappa}.
\]
Thus the universal finite critical exponent is sharp for every
admissible pair in the fixed dimension $d$.
\end{theorem}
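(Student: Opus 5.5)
The construction is a random union of cosets of a subgroup, following the heuristic described just before the theorem. Write $N=\ell^k$ and choose integers $b=\lfloor 2\kappa k\rfloor$, so $0\le b\le dk$, and $M=\lceil \ell^{(\alpha-2\kappa)k}\rceil$. By the subgroup fact recorded above, there is a subgroup $H\le\mathbb Z_N^d$ with $h=|H|=\ell^b\asymp N^{2\kappa}$; the implied constants depend only on $\ell$, which is fixed. Let $G=\mathbb Z_N^d/H$, so $L=|G|=N^d/h\asymp N^{d-2\kappa}$. Since $\alpha<d$, we have $M\asymp N^{\alpha-2\kappa}$ and $M<L$ for large $k$. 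Choose $R\subseteq G$ uniformly among $M$-element subsets, and let $S_N$ be the union of the cosets in $R$. Then $|S_N|=hM\asymp N^\alpha$.

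The first step is to compute the transform exactly. For $\xi$ outside the annihilator $H^\perp$, the sum over each coset vanishes, so $\widehat{1_{S_N}}(\xi)=0$. For $\xi\in H^\perp$, the character $x\mapsto e^{-2\pi i x\cdot\xi/N}$ descends to a character $\chi_\xi$ of $G$, and
\[
\widehat{1_{S_N}}(\xi)=N^{-\frac d2}h\sum_{y\in R}\chi_\xi(y).
\]
The map $\xi\mapsto\chi_\xi$ identifies $H^\perp$ with the dual of $G$, so there are $L$ such frequencies, and only $\xi=0$ gives the trivial character. Lemma~\ref{lem:random-moments} then gives, for $r\in\{4,p_c\}$,
\[
\mathbb E\sum_{\xi\neq0}|\widehat{1_{S_N}}(\xi)|^r\le C_r\,L\,(N^{-\frac d2}hM^{\frac12})^r.
\]
By Markov's inequality and a union bound, some choice of $R$ makes both nonzero-frequency moments at most $3C_r L(N^{-d/2}hM^{1/2})^r$. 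We fix such an $R$.

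The second step is the critical norm. Put $f_N=1_{S_N}/\widehat{1_{S_N}}(0)$. Positivity of $1_{S_N}$ shows that the largest coefficient is at the origin, so $\|\widehat f_N\|_\infty=1$. The $p_c$-th moment of $\widehat f_N$ is at most
\[
1+3C_{p_c}\,L\,M^{-\frac{p_c}{2}}.
\]
The exponent is chosen exactly so that $M^{p_c/2}\asymp N^{(\alpha-2\kappa)p_c/2}=N^{d-2\kappa}\asymp L$. Hence this moment is bounded uniformly in $k$.

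The third step is the Fourier ratio. The upper bound follows from Cauchy--Schwarz on the support $H^\perp$:
\[
\operatorname{FR}(1_{S_N})\le L^{\frac12}\asymp N^{\frac d2-\kappa}.
\]
For the lower bound we work with the nonzero frequencies. By Plancherel,
\[
\sum_{\xi\neq0}|\widehat{1_{S_N}}|^2=|S_N|-\frac{|S_N|^2}{N^d}\asymp hM,
\]
since $|S_N|/N^d=N^{\alpha-d+o(1)}\to0$. Hölder's inequality in the form $\sum|F|^2\le(\sum|F|)^{2/3}(\sum|F|^4)^{1/3}$, together with the fourth-moment bound $\lesssim h^3M^2N^{-d}$, yields
\[
\|\widehat{1_{S_N}}\|_1\gtrsim N^{\frac d2}M^{\frac12}.
\]
Dividing by $\|\widehat{1_{S_N}}\|_2=(hM)^{1/2}$ gives $\operatorname{FR}(1_{S_N})\gtrsim (N^d/h)^{1/2}\asymp N^{d/2-\kappa}$.

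The main obstacle is arranging the parameters so that all exponents are exactly attained along $\ell^k$ with only bounded rounding losses. A second point is securing both the fourth and the $p_c$-th moments at once for a single realization. The union-bound choice of $R$ handles the second point. The degenerate case $\kappa=0$, where $H$ is trivial, needs no separate treatment.
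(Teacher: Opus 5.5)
Your proof is correct. The construction, the computation of the transform on $H^\perp$, the critical-norm bound, and the Cauchy--Schwarz upper bound for the Fourier ratio all coincide with the paper's.

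The one real difference is the lower bound for $\operatorname{FR}(1_{S_N})$. You use a second moment bound, the fourth moment over nonzero frequencies as in Lemma~\ref{lem:random-zero}. You then need Markov's inequality and a union bound to get it for the same realization as the $p_c$-moment bound. The paper instead puts the single $p_c$-moment bound into \eqref{eq:interpolation-fr}. With $\|\widehat f_N\|_2^2=L_N/M_N$ and the calibration $M_N\asymp L_N^{2/p_c}$, this gives $\operatorname{FR}(f_N)\gtrsim (L_N/M_N)^{\frac{p_c}{2(p_c-2)}}\asymp L_N^{1/2}$.

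The paper's route is more economical. It needs only one random event, and it shows that the moment controlling the critical norm also determines the Fourier ratio. Your route is more robust. Because you discard the zero frequency, the fourth-moment argument gives $\operatorname{FR}\gtrsim L^{1/2}$ for any $M\leq cL$ with $c<1$, with no tuning of $M$ against $L$.

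One small omission: you do not say what $S_N$ is at the finitely many initial moduli where $M<L$ fails. The paper takes any nonempty set there, normalized so that $\widehat f_N(0)=1$, which keeps $\sup_N\|\widehat f_N\|_{p_c}$ finite.
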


\begin{proof}
Put $\beta=2\kappa$ and $\gamma=\alpha-2\kappa$. Choose a subgroup
$H_N$ of order
\[
h_N=\ell^{\lfloor\beta k\rfloor}\asymp N^\beta,
\]
and let $G_N=\mathbb Z_N^d/H_N$. Its order is
$L_N=\frac{N^d}{h_N}\asymp N^{d-\beta}$. Choose
$M_N=\lfloor N^\gamma\rfloor$ points uniformly in $G_N$ and call the
resulting set $R_N$. Since $\alpha<d$,
$\frac{M_N}{L_N}\to0$, so this choice is possible for all sufficiently
large $N$. Let $S_N$ be the full inverse image of $R_N$ and define
\[
f_N=\frac{N^{\frac{d}{2}}}{h_NM_N}1_{S_N}.
\]
The set has $h_NM_N\asymp N^\alpha$ elements.

The annihilator of $H_N$ is
\[
H_N^\perp=
\{\xi\in\mathbb Z_N^d:
e^{\frac{2\pi i x\cdot\xi}{N}}=1\text{ for every }x\in H_N\}.
\]
For completeness, summing a character over $H_N$ gives
\[
\widehat{1_{H_N}}(\xi)
=N^{-\frac{d}{2}}h_N1_{H_N^\perp}(\xi).
\]
Plancherel therefore gives
$h_N=N^{-d}h_N^2|H_N^\perp|$, so $|H_N^\perp|=L_N$.
For $\xi\in H_N^\perp$, the character
$e^{-\frac{2\pi i x\cdot\xi}{N}}$ is constant on each coset of $H_N$
and hence defines a character $\chi_\xi$ on $G_N$. This character is
nontrivial unless $\xi=0$.
Summing over each coset shows that $\widehat f_N$ vanishes outside
$H_N^\perp$, while on this annihilator it has the form
\begin{equation}
\label{eq:quotient-fourier}
\widehat f_N(\xi)=\frac{1}{M_N}\sum_{y\in R_N}\chi_\xi(y),
\end{equation}
where $\chi_\xi$ is the corresponding character, with the negative sign
in the Fourier convention. In particular,
$\widehat f_N(0)=1$ and $\|\widehat f_N\|_\infty=1$.

Since $p_c=\frac{2(d-\beta)}{\gamma}>2$, we have
$L_N\asymp M_N^{\frac{p_c}{2}}$. Lemma~\ref{lem:random-moments}
gives
\[
\mathbb E\|\widehat f_N\|_{p_c}^{p_c}
\leq1+C_{p_c}L_NM_N^{-\frac{p_c}{2}}
\leq C.
\]
For each modulus choose a realization whose moment is no greater than
this fixed bound. This proves the norm assertion.

It remains to verify the concentration exponent. Plancherel gives
\[
\|\widehat f_N\|_2^2
=\frac{N^d}{h_NM_N}=\frac{L_N}{M_N}.
\]
Since its Fourier transform is supported on $L_N$ points,
$\operatorname{FR}(f_N)\leq L_N^{\frac{1}{2}}$. In the other direction,
\eqref{eq:interpolation-fr} implies
\[
\begin{aligned}
\operatorname{FR}(f_N)
&\geq\left(\frac{\|\widehat f_N\|_2}{\|\widehat f_N\|_{p_c}}\right)^{\frac{p_c}{p_c-2}}\\
&\gtrsim\left(\frac{L_N}{M_N}\right)^{\frac{p_c}{2(p_c-2)}}
\asymp L_N^{\frac{1}{2}}.
\end{aligned}
\]
The last comparison uses $M_N\asymp L_N^{\frac{2}{p_c}}$.
Scalar multiplication does not change the Fourier ratio, so
\[
\operatorname{FR}(1_{S_N})
\asymp L_N^{\frac{1}{2}}\asymp N^{\frac{d}{2}-\kappa}.
\]
The finitely many initial moduli can be filled in arbitrarily, with an
enlarged norm bound if needed: choose any nonempty set there and
normalize its indicator so that the Fourier coefficient at zero is
one.
\end{proof}

The same counterexamples work for every $p\geq p_c$, since
$\|\widehat f_N\|_p\leq\|\widehat f_N\|_{p_c}$ with counting measure.
At $\kappa=0$, the subgroup is trivial and the proof gives a direct
moment argument for the cardinality endpoint. This endpoint was
established using Bourgain's theorem in
\cite[Proposition 9]{BhowmikDeodharIosevich}.

At $\kappa=\frac{\alpha}{2}$, a subgroup of cardinality
$\ell^{\lfloor\alpha k\rfloor}$ realizes the parameters directly.
Its Fourier transform is constant on its annihilator, and
$\operatorname{FR}(1_{H_N})=(\frac{N^d}{|H_N|})^{\frac{1}{2}}$.
There is no finite critical exponent to test at this endpoint.
Thus every admissible pair is realizable in each dimension $d>\alpha$
along prime powers, including maximal concentration.

The preceding construction proves a statement about the best theorem
available from the two exponents alone. It does not say that all sets
with those exponents have the same synthesis constant. The arithmetic
paraboloids in Section~\ref{sec:arithmetic} show why that distinction
matters.

\subsection{Building blocks for all integer moduli}

The original question about realizing arbitrary pairs can also be
answered without restricting the moduli, if the ambient dimension is
allowed to increase. We record this construction because intervals and
random sets display the two extremes particularly clearly.

\begin{lemma}
\label{lem:random-zero}
For each $0<\gamma\leq1$, there are sets $R_N\subseteq\mathbb Z_N$
with $|R_N|=N^{\gamma+o(1)}$ and
$\operatorname{FR}(1_{R_N})\asymp N^{\frac{1}{2}}$. In particular, their
finite-scale concentration exponents tend to zero.
\end{lemma}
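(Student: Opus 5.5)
The plan is to reuse the random sampling argument of Theorem~\ref{thm:all-pair-sharpness} with the trivial subgroup, now in dimension one and for an arbitrary modulus $N$. Lemma~\ref{lem:random-moments} requires only a finite abelian group, so no prime-power restriction is needed.

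\emph{The case $\gamma=1$.} Here the interval itself would be too concentrated, since its Fourier ratio is of order $\log N$. Instead I take $R_N$ to be a random subset of size $M_N=\lfloor N/2\rfloor$. Then $|R_N|=N^{1+o(1)}$, and the condition $M_N<N$ in Lemma~\ref{lem:random-moments} holds.

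\emph{The case $0<\gamma<1$.} I take $M_N=\lfloor N^\gamma\rfloor$, which is at least $1$ and less than $N$ for large $N$, and choose $R_N$ uniformly among $M_N$-element subsets of $\mathbb Z_N$.

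\emph{The upper bound.} Cauchy--Schwarz gives $\operatorname{FR}(1_{R_N})\leq N^{\frac12}$ for every set.

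\emph{The lower bound.} I will bound a higher moment and interpolate exactly as in Theorem~\ref{thm:all-pair-sharpness}. Fix an even exponent $p>2$ with $M_N^{p/2}\gtrsim N$; for example, any $p\geq 2/\gamma$ works, and for $\gamma=1$ one may take $p=4$. Write
\[
\widehat{1_{R_N}}(\xi)=N^{-\frac12}\sum_{x\in R_N}e^{-\frac{2\pi i x\xi}{N}}.
\]
The term $\xi=0$ contributes $N^{-p/2}M_N^{p}$ to the $p$-th moment. Lemma~\ref{lem:random-moments}, summed over the $N-1$ nontrivial characters, gives
\[
\mathbb E\|\widehat{1_{R_N}}\|_p^p\leq N^{-\frac p2}\bigl(M_N^p+C_pNM_N^{\frac p2}\bigr).
\]
I fix a realization attaining at most this bound.

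\emph{Interpolation.} Plancherel gives $\|\widehat{1_{R_N}}\|_2^2=M_N$. Inequality \eqref{eq:interpolation-fr} then gives
\[
\operatorname{FR}(1_{R_N})\geq\bigl(\|\widehat{1_{R_N}}\|_2/\|\widehat{1_{R_N}}\|_p\bigr)^{\frac{p}{p-2}}.
\]
The ratio $\|\widehat{1_{R_N}}\|_p^p/\|\widehat{1_{R_N}}\|_2^p$ is at most a constant times
\[
N^{-\frac p2}M_N^{\frac p2}+N^{1-\frac p2}.
\]
When $M_N\leq N$, the first term is at most $N^{1-\frac p2}$ up to constants, because $M_N^{p/2}\leq N^{p/2}$ is weaker than needed. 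So I compare more carefully: $N^{-p/2}M_N^{p/2}\leq N^{1-p/2}$ exactly when $M_N^{p/2}\leq N$. This fails for large $p$, which is the main obstacle.

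\emph{Choosing $p$.} To handle this, I choose $p$ so that the zero-frequency term does not dominate. The natural choice is $p=4$ for every $\gamma$. Then the zero term over the $\ell^2$ norm is $M_N^2/N^2\leq 1/N$ only if $M_N\leq N^{1/2}$, which fails for $\gamma>1/2$. So the mean term must be handled instead by removing the origin. I will bound the $\ell^1$ norm from below over the nonzero frequencies alone, interpolating on $\xi\neq0$. Set $F=\widehat{1_{R_N}}1_{\xi\neq0}$. Then
\[
\|F\|_2^2=M_N-M_N^2/N\asymp M_N \quad(\text{using } M_N\leq N/2),
\]
and, with $p=4$,
\[
\|F\|_4^4\lesssim N^{-2}NM_N^2=M_N^2/N.
\]
Hence $\|F\|_2/\|F\|_4\gtrsim M_N^{1/2}N^{1/4}M_N^{-1/2}=N^{1/4}$, and interpolation yields
\[
\|F\|_1\geq\|F\|_2\,(\|F\|_2/\|F\|_4)^{2}\gtrsim M_N^{1/2}N^{1/2}.
\]
Therefore $\operatorname{FR}(1_{R_N})\geq\|F\|_1/\|\widehat{1_{R_N}}\|_2\gtrsim N^{1/2}$, uniformly in $\gamma$.

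Combining the two bounds gives $\operatorname{FR}(1_{R_N})\asymp N^{1/2}$, so $\kappa_N(1_{R_N})=O(1/\log N)$, which tends to zero. The only real work is the fourth-moment step with the origin excluded; everything else follows from Lemma~\ref{lem:random-moments} and Plancherel.
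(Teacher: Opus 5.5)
Your final argument is correct and is essentially the paper's proof. It uses the same choices $M_N=\lfloor N^\gamma\rfloor$ (or $\lfloor N/2\rfloor$ when $\gamma=1$), the fourth-moment bound from Lemma~\ref{lem:random-moments} summed over $\xi\neq0$, the $\ell^1$--$\ell^4$ interpolation restricted to nonzero frequencies with $\|F\|_2^2=M_N-M_N^2/N\gtrsim M_N$, and Cauchy--Schwarz for the upper bound; the earlier passage with a general even $p$, where the zero-frequency term dominates once $M_N^{p/2}\gtrsim N$, is a dead end you rightly abandoned and should delete.
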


\begin{proof}
For $\gamma<1$, put $M_N=\lfloor N^\gamma\rfloor$; for $\gamma=1$,
put $M_N=\lfloor\frac{N}{2}\rfloor$. Choose a random $M_N$-element
subset. Lemma~\ref{lem:random-moments} with $r=4$ gives
\[
\mathbb E\sum_{\xi\neq0}|\widehat{1_{R_N}}(\xi)|^4
\leq C\frac{M_N^2}{N}.
\]
Choose a realization satisfying the same upper bound. The nonzero
Fourier coefficients have square sum
$M_N-\frac{M_N^2}{N}\gtrsim M_N$. Interpolation restricted to these
coefficients gives
\[
\sum_{\xi\neq0}|\widehat{1_{R_N}}(\xi)|
\geq
\frac{\left(\sum_{\xi\neq0}|\widehat{1_{R_N}}(\xi)|^2
\right)^{\frac{3}{2}}}{\left(\sum_{\xi\neq0}|\widehat{1_{R_N}}(\xi)|^4
\right)^{\frac{1}{2}}}
\gtrsim(NM_N)^{\frac{1}{2}}.
\]
Cauchy--Schwarz gives the reverse bound for the full $\ell^1$ norm.
Division by $M_N^{\frac{1}{2}}$ proves the claim.
\end{proof}

\begin{lemma}
\label{lem:intervals}
For each $0<\beta\leq1$, there are sets $B_N\subseteq\mathbb Z_N$
with support-size exponent $\beta$ whose finite-scale concentration
exponents tend to $\frac{\beta}{2}$.
\end{lemma}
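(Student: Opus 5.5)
Take $B_N=\{0,1,\ldots,K_N-1\}\subseteq\mathbb Z_N$, an interval of length $K_N=\lfloor N^\beta\rfloor$ when $\beta<1$. When $\beta=1$, take $K_N=\lfloor\frac{N}{2}\rfloor$ rather than all of $\mathbb Z_N$. The full group would also work, since its Fourier ratio is one, but an interval treats both cases uniformly. The support-size exponent is $\beta$ by construction. We have $\|\widehat{1_{B_N}}\|_2=K_N^{\frac{1}{2}}$ by Plancherel. By \eqref{eq:algebra-kappa}, it suffices to show that the Fourier algebra norm $\mathcal A_N(B_N)=N^{-\frac{1}{2}}\|\widehat{1_{B_N}}\|_1$ is $N^{o(1)}$. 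Indeed, \eqref{eq:maximal-algebra} would then give the lower limit, and Lemma~\ref{lem:support-fr} gives the upper bound $\frac{\beta}{2}$, so the finite-scale exponents converge to $\frac{\beta}{2}$.

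The core step is the classical Dirichlet-kernel $\ell^1$ bound. Summing the geometric series gives, for $\xi\neq0$,
\[
|\widehat{1_{B_N}}(\xi)|=N^{-\frac{1}{2}}\left|\frac{\sin(\pi K_N\xi/N)}{\sin(\pi\xi/N)}\right|.
\]
Represent $\xi$ by $|\xi|\leq\frac{N}{2}$ and use $|\sin(\pi\xi/N)|\geq2|\xi|/N$. This yields
\[
|\widehat{1_{B_N}}(\xi)|\leq N^{-\frac{1}{2}}\min\Bigl\{K_N,\frac{N}{2|\xi|}\Bigr\}.
\]
Summing over $\xi$ gives
\[
\|\widehat{1_{B_N}}\|_1\lesssim N^{-\frac{1}{2}}\Bigl(K_N\cdot\frac{N}{K_N}+\sum_{1\leq|\xi|\leq N/2}\frac{N}{|\xi|}\Bigr)\lesssim N^{\frac{1}{2}}\log N.
\]
Hence $\mathcal A_N(B_N)\lesssim\log N=N^{o(1)}$. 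Here the range $|\xi|\leq N/K_N$ is handled by the first bound in the minimum and the rest by the harmonic sum. This summation is the main point of the proof. It is routine but is where the logarithm arises, and a logarithm is all we can afford.

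Substituting into \eqref{eq:algebra-kappa} gives
\[
\kappa_N(1_{B_N})=\frac{\log K_N}{2\log N}-\frac{\log\mathcal A_N(B_N)}{\log N}\geq\frac{\log K_N}{2\log N}-\frac{\log(C\log N)}{\log N}\longrightarrow\frac{\beta}{2}.
\]
This holds because $\mathcal A_N\geq1$ and the second term tends to zero. Combined with the upper bound from Lemma~\ref{lem:support-fr}, the finite-scale exponents tend to $\frac{\beta}{2}$. No restriction on the modulus is needed, which is the purpose of this building block.
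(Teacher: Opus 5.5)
Your proof is correct and is essentially the paper's argument: the initial interval of length $\lfloor N^\beta\rfloor$, the Dirichlet-kernel bound $\|\widehat{1_{B_N}}\|_1\lesssim N^{\frac{1}{2}}\log N$, Plancherel for the $\ell^2$ norm, and inversion (equivalently $\mathcal A_N(B_N)\geq1$) for the matching upper bound on $\kappa_N$. The differences are cosmetic: you phrase the conclusion through $\mathcal A_N(B_N)$ and \eqref{eq:algebra-kappa}, and for $\beta=1$ you use a half-interval where the paper takes $B_N=\mathbb Z_N$, whose Fourier ratio is one; both choices work.
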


\begin{proof}
For $\beta<1$, let $M=\lfloor N^\beta\rfloor$ and
$B_N=\{0,1,\ldots,M-1\}$. The geometric-series formula gives
\[
|\widehat{1_{B_N}}(\xi)|
=N^{-\frac{1}{2}}
\left|\frac{\sin(\frac{\pi M\xi}{N})}{\sin(\frac{\pi\xi}{N})}\right|,
\qquad \xi\neq0.
\]
For $1\leq\xi\leq\frac{N}{2}$, the unnormalized sum is bounded by
$C\min\{M,\frac{N}{\xi}\}$. Splitting the sum at $\frac{N}{M}$ and using
symmetry gives
\[
\begin{aligned}
\|\widehat{1_{B_N}}\|_1
&\lesssim N^{-\frac{1}{2}}
\left(M+N+N\sum_{\frac{N}{M}<\xi\leq\frac{N}{2}}\frac{1}{\xi}\right)\\
&\lesssim N^{\frac{1}{2}}(1+\log M).
\end{aligned}
\]
Inversion gives the lower bound $N^{\frac{1}{2}}$, and Plancherel gives
$\|\widehat{1_{B_N}}\|_2=M^{\frac{1}{2}}$. Hence
\[
\left(\frac{N}{M}\right)^{\frac{1}{2}}
\leq\operatorname{FR}(1_{B_N})
\lesssim\left(\frac{N}{M}\right)^{\frac{1}{2}}(1+\log M).
\]
Taking logarithms proves the asserted limit. For $\beta=1$, use
$B_N=\mathbb Z_N$, whose Fourier ratio is one.
\end{proof}

\begin{corollary}
\label{cor:all-moduli-realization}
For every $\alpha>0$ and $0\leq\kappa\leq\frac{\alpha}{2}$, there
are a dimension $d$ and sets $S_N\subseteq\mathbb Z_N^d$, defined
for all integer moduli, with support-size exponent $\alpha$ and
concentration exponent $\kappa$.
\end{corollary}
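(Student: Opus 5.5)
The plan is to build $S_N$ as a tensor product of one-dimensional building blocks and to use Lemma~\ref{lem:products}, applied to indicators, to add the exponents. Both support sizes and Fourier ratios multiply under products, since $1_{A\times B}=1_A\otimes 1_B$. So if every factor has a convergent finite-scale concentration exponent and a convergent support-size exponent, the product has support-size exponent equal to the sum of the support-size exponents and concentration exponent equal to the sum of the concentration exponents. In that case the lower limit $\kappa$ is actually a limit.

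The available blocks, defined for every modulus, are:
\begin{itemize}
\item the random sets of Lemma~\ref{lem:random-zero}, which have support-size exponent $\gamma\in(0,1]$ and concentration exponent $0$;
\item the intervals of Lemma~\ref{lem:intervals}, which have support-size exponent $\beta\in(0,1]$ and concentration exponent $\frac{\beta}{2}$.
\end{itemize}
Write $\beta=2\kappa$ and $\gamma=\alpha-2\kappa\geq0$. We want a product of interval blocks whose support exponents sum to $\beta$, and a product of random blocks whose support exponents sum to $\gamma$.

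\textbf{Interval factors.} If $\beta>0$, choose $m=\lceil\beta\rceil$ and use $m$ interval blocks, each with exponent $\frac{\beta}{m}\in(0,1]$. Their support-size exponents sum to $\beta$ and their concentration exponents sum to $\frac{\beta}{2}=\kappa$. If $\beta=0$, omit these factors.

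\textbf{Random factors.} If $\gamma>0$, choose $n=\lceil\gamma\rceil$ and use $n$ random blocks, each with exponent $\frac{\gamma}{n}\in(0,1]$. These contribute support-size exponent $\gamma$ and concentration exponent $0$. If $\gamma=0$, omit them.

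\textbf{Conclusion.} Take $d=m+n$, which is at least $1$ because $\alpha>0$. The product set $S_N$ then has support-size exponent $\beta+\gamma=\alpha$ and concentration exponent $\kappa$. The case $\kappa=\frac{\alpha}{2}$ (pure intervals) and the case $\kappa=0$ (pure random sets) are covered by the same construction.

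The only delicate point is the equality case in Lemma~\ref{lem:products}: it requires each factor's finite-scale exponent to converge, not merely to have a lower limit.
\begin{itemize}
\item For the intervals, the two-sided bound $(N/M)^{1/2}\le \operatorname{FR}\lesssim (N/M)^{1/2}(1+\log M)$ shows that $\kappa_N$ converges to $\frac{\beta}{2}$.
\item For the random sets, $\operatorname{FR}\asymp N^{1/2}$ gives $\kappa_N\to0$.
\item The support-size exponents converge by construction, since $M=\lfloor N^{\beta/m}\rfloor$ and similarly for the random blocks.
\end{itemize}
Since the finite-scale exponents of the factors converge, their sum converges, so $\kappa(\{1_{S_N}\})=\kappa$ exactly. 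Finally, all the blocks are defined for every integer $N$. This is clear for intervals. For the random sets, Lemma~\ref{lem:random-zero} is stated for all $N$ (the condition $M_N<N$ holds once $N$ is large), and small $N$ can be filled in with any nonempty set without affecting limits.
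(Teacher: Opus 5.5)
Your proposal is correct and matches the paper's proof: split $2\kappa$ and $\alpha-2\kappa$ into pieces in $(0,1]$, use interval factors from Lemma~\ref{lem:intervals} and random factors from Lemma~\ref{lem:random-zero}, and use convergence of the finite-scale exponents to get equality in Lemma~\ref{lem:products}. Your choice of $\lceil 2\kappa\rceil$ and $\lceil\alpha-2\kappa\rceil$ equal pieces is a concrete instance of the paper's ``finitely many numbers in $(0,1]$''. The paper also notes that a singleton coordinate can be appended if $d>\alpha$ is wanted.
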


\begin{proof}
Write $2\kappa$ as a sum of finitely many numbers in $(0,1]$, and do
the same for $\alpha-2\kappa$, omitting the corresponding sum if the
parameter is zero. Use interval factors from Lemma~\ref{lem:intervals}
for the first sum and random factors from Lemma~\ref{lem:random-zero}
for the second. Their Cartesian product has the required support-size
exponent. All finite-scale concentration exponents converge, so
Lemma~\ref{lem:products} makes their limits additive and gives exactly
$\kappa$. A singleton coordinate may be added if a dimension strictly
larger than $\alpha$ is desired.
\end{proof}

\section{Fourier moments, additive structure, and extension}
\label{sec:structure}

The Fourier ratio contains an $\ell^1$ norm, which can be difficult to
compute. Higher moments are often more accessible. They can be
estimated by cancellation in exponential sums or by counting additive
configurations. We first explain what such estimates imply for the
concentration exponent, and then examine the structural information
available at the opposite extreme.

\subsection{Generalized Salem estimates}

Following the framework of Fraser \cite{Fraser}, with our Fourier
normalization, we use the following definition. Suppose throughout this
subsection that $|S_N|=N^{\alpha+o(1)}$, where $0<\alpha<d$.

\begin{definition}
For $r>2$ and $0\leq s\leq\frac{1}{2}$, the family $\{S_N\}$ is uniformly
$(r,s)$-Salem if
\[
\left(N^{-d}\sum_{\xi\neq0}
|\widehat{1_{S_N}}(\xi)|^r\right)^{\frac{1}{r}}
\lesssim N^{-\frac{d}{2}}|S_N|^{1-s}.
\]
\end{definition}

The zero frequency is omitted because its value is determined by the
cardinality. Larger values of $s$ express better cancellation, and
$s=\frac{1}{2}$ is square-root cancellation in this averaged sense.

\begin{theorem}
\label{thm:salem}
If $\{S_N\}$ is uniformly $(r,s)$-Salem, then
\[
\overline\kappa(\{1_{S_N}\})
\leq\min\left\{\frac{\alpha}{2},
\frac{\alpha r(1-2s)}{2(r-2)}\right\}.
\]
In particular, a uniform $(r,\frac{1}{2})$-Salem estimate for any $r>2$
forces the finite-scale concentration exponents to tend to zero.
\end{theorem}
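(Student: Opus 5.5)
The plan is to derive a lower bound for $\operatorname{FR}(1_{S_N})$ by interpolating on the Fourier side. The $\ell^2$ mass of the nonzero frequencies is fixed by Plancherel, and the Salem hypothesis bounds their $\ell^r$ mass. An $\ell^1$--$\ell^r$ interpolation inequality then forces the $\ell^1$ norm to be large. The bound $\overline\kappa\leq\frac{\alpha}{2}$ is already Lemma~\ref{lem:support-fr}, so only the second entry of the minimum needs proof.

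Let $F$ denote the restriction of $\widehat{1_{S_N}}$ to $\xi\neq0$.

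First, Plancherel together with $\widehat{1_{S_N}}(0)=N^{-\frac{d}{2}}|S_N|$ gives
\[
\|F\|_2^2=|S_N|-\frac{|S_N|^2}{N^d}.
\]
Since $\alpha<d$, the density $|S_N|N^{-d}$ tends to zero, so $\|F\|_2^2\gtrsim|S_N|$ for large $N$.

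Second, rewriting the uniform $(r,s)$-Salem condition gives
\[
\|F\|_r^r\lesssim N^{d-\frac{dr}{2}}|S_N|^{r(1-s)}.
\]

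Third, H\"older's inequality in the form $\|F\|_2\leq\|F\|_1^{\theta}\|F\|_r^{1-\theta}$, with $\frac12=\theta+\frac{1-\theta}{r}$, can be solved for the $\ell^1$ norm:
\[
\|F\|_1\geq\frac{\|F\|_2^{\frac{2(r-1)}{r-2}}}{\|F\|_r^{\frac{r}{r-2}}}.
\]
This is the same rearrangement as \eqref{eq:interpolation-fr}. Inserting the two bounds from the previous steps and collecting exponents, I expect
\[
\|\widehat{1_{S_N}}\|_1\geq\|F\|_1\gtrsim N^{\frac{d}{2}}|S_N|^{\frac{rs-1}{r-2}}.
\]
For the powers of $N$: the exponent is $-\frac{r}{r-2}\bigl(\frac dr-\frac d2\bigr)=\frac d2$. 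For the powers of $|S_N|$: the exponent is $\frac{r-1}{r-2}-\frac{r(1-s)}{r-2}=\frac{rs-1}{r-2}$.

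Fourth, divide by $\|\widehat{1_{S_N}}\|_2=|S_N|^{\frac12}$. This yields
\[
\operatorname{FR}(1_{S_N})\gtrsim N^{\frac{d}{2}}|S_N|^{-\frac{r(1-2s)}{2(r-2)}}.
\]
Taking logarithms and using $|S_N|=N^{\alpha+o(1)}$, we get
\[
\kappa_N(1_{S_N})\leq(\alpha+o(1))\frac{r(1-2s)}{2(r-2)}+o(1).
\]
Taking the upper limit gives the claimed bound on $\overline\kappa$. At $s=\frac12$ the right side is $o(1)$, and since $\kappa_N\geq0$ this gives convergence to zero.

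I do not expect a serious obstacle. The only points needing care are the following. One must discard the zero frequency correctly, which uses $\alpha<d$ to keep $\|F\|_2^2\gtrsim|S_N|$. One must also track the exponent bookkeeping in the interpolation step. Note that removing the origin only decreases the $\ell^1$ norm, so the lower bound for $\|\widehat{1_{S_N}}\|_1$ remains valid.
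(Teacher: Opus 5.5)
Your proposal is correct and follows essentially the same route as the paper's proof: restrict $\widehat{1_{S_N}}$ to nonzero frequencies, use Plancherel with $\alpha<d$ for the $\ell^2$ mass, bound the $\ell^r$ norm by the Salem hypothesis, and solve the $\ell^1$--$\ell^r$ interpolation inequality for the $\ell^1$ norm. From there you divide by $|S_N|^{\frac12}$, take logarithms, and combine with Lemma~\ref{lem:support-fr}; your exponent bookkeeping, $\frac{d}{2}$ for $N$ and $\frac{rs-1}{r-2}$ for $|S_N|$, matches the paper's.
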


\begin{proof}
Let $F_N$ denote the restriction of $\widehat{1_{S_N}}$ to nonzero
frequencies. Since $|S_N|=o(N^d)$, Plancherel gives
\[
\|F_N\|_2^2
=|S_N|-\frac{|S_N|^2}{N^d}\asymp|S_N|.
\]
The Salem hypothesis gives
$\|F_N\|_r\lesssim N^{\frac{d}{r}-\frac{d}{2}}|S_N|^{1-s}$.
Interpolating between $\ell^1$ and $\ell^r$ and solving for the
$\ell^1$ norm yields
\[
\begin{aligned}
\|F_N\|_1
&\geq
\|F_N\|_2^{\frac{2(r-1)}{r-2}}
\|F_N\|_r^{-\frac{r}{r-2}}\\
&\gtrsim N^{\frac{d}{2}}|S_N|^{\frac{rs-1}{r-2}}.
\end{aligned}
\]
Dividing by $|S_N|^{\frac{1}{2}}$ gives
\[
\operatorname{FR}(1_{S_N})
\gtrsim N^{\frac{d}{2}}|S_N|^{\frac{r(2s-1)}{2(r-2)}}.
\]
Taking logarithms gives an upper bound for $\kappa_N(1_{S_N})$ at
every sufficiently large modulus. Its upper limit is at most
$\frac{\alpha r(1-2s)}{2(r-2)}$. Combining this with
Lemma~\ref{lem:support-fr} proves the theorem.
\end{proof}

The Salem bound improves the elementary upper bound
$\frac{\alpha}{2}$ precisely when $s>\frac{1}{r}$. Notice the direction
of the conclusion. Good Fourier cancellation prevents concentration.
It therefore bounds $\kappa$ above; it does not supply the positive
lower bound for $\kappa$ that enlarges the universal synthesis range.

\subsection{Additive energy and maximal concentration}

The additive energy of a set $S$ is
\[
E(S)=\#\{(x_1,x_2,x_3,x_4)\in S^4:
x_1+x_2=x_3+x_4\}.
\]
Expanding the fourth Fourier moment and using character orthogonality
gives
\begin{equation}
\label{eq:energy-identity}
\sum_\xi|\widehat{1_S}(\xi)|^4=N^{-d}E(S).
\end{equation}
Indeed, the four Fourier factors contribute $N^{-2d}$, and summing the
character over $\xi$ contributes $N^d$ exactly when the displayed
additive equation holds. Also $E(S)\leq|S|^3$, because any three
variables determine the fourth.

\begin{proposition}
\label{prop:energy-algebra}
For every nonempty $S\subseteq\mathbb Z_N^d$,
\begin{equation}
\label{eq:energy-fr}
\operatorname{FR}(1_S)
\geq\frac{N^{\frac{d}{2}}|S|}{E(S)^{\frac{1}{2}}},
\qquad
E(S)\geq\frac{|S|^3}{\mathcal A_N(S)^2}.
\end{equation}
If $|S_N|=N^{\alpha+o(1)}$ and, for some $0\leq\eta\leq1$,
\[
E(S_N)\leq|S_N|^{3-\eta}N^{o(1)},
\]
then
\[
\overline\kappa(\{1_{S_N}\})
\leq\frac{\alpha(1-\eta)}{2}.
\]
\end{proposition}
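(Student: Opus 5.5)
The plan is to prove the first inequality in \eqref{eq:energy-fr} by interpolation between $\ell^1$ and $\ell^4$ on the Fourier side, combined with the energy identity \eqref{eq:energy-identity}. The other two assertions will then follow by rewriting and by taking logarithms.

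For the first inequality, Hölder with $\frac12=\frac{\theta}{1}+\frac{1-\theta}{4}$, so $\theta=\frac13$, gives
\[
\|\widehat{1_S}\|_2\leq\|\widehat{1_S}\|_1^{\frac13}\|\widehat{1_S}\|_4^{\frac23}.
\]
Equivalently, $\|\widehat{1_S}\|_1\geq\|\widehat{1_S}\|_2^3\|\widehat{1_S}\|_4^{-2}$. Insert $\|\widehat{1_S}\|_2^2=|S|$ from Plancherel and $\|\widehat{1_S}\|_4^4=N^{-d}E(S)$ from \eqref{eq:energy-identity}. This yields
\[
\|\widehat{1_S}\|_1\geq\frac{|S|^{\frac32}N^{\frac d2}}{E(S)^{\frac12}}.
\]
Dividing by $\|\widehat{1_S}\|_2=|S|^{\frac12}$ gives the stated lower bound for $\operatorname{FR}(1_S)$.

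For the second inequality, substitute $\operatorname{FR}(1_S)=N^{\frac d2}\mathcal A_N(S)|S|^{-\frac12}$ from \eqref{eq:algebra-kappa} into the first one. This gives $\mathcal A_N(S)\geq|S|^{\frac32}E(S)^{-\frac12}$, and squaring and rearranging gives $E(S)\geq|S|^3\mathcal A_N(S)^{-2}$.

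For the asymptotic statement, apply the first inequality to $S_N$ with the hypothesis $E(S_N)\leq|S_N|^{3-\eta}N^{o(1)}$. This gives
\[
\operatorname{FR}(1_{S_N})\geq N^{\frac d2}|S_N|^{-\frac{1-\eta}{2}}N^{-o(1)}.
\]
Hence
\[
\kappa_N(1_{S_N})\leq\frac{(1-\eta)\log|S_N|}{2\log N}+o(1),
\]
and since $\log|S_N|/\log N\to\alpha$, the upper limit is at most $\frac{\alpha(1-\eta)}{2}$. The only point requiring care is keeping track of the direction of the $N^{o(1)}$ factor. There is no real obstacle: the argument is the $r=4$ case of the interpolation used in Theorem~\ref{thm:salem}, except that the zero frequency is retained, which avoids any need for $|S|=o(N^d)$.
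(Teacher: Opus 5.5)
Your proposal is correct and follows the paper's proof essentially step for step. It uses $\ell^1$--$\ell^4$ interpolation with $\theta=\frac{1}{3}$, together with Plancherel and \eqref{eq:energy-identity}, for the Fourier-ratio bound; substitution of \eqref{eq:algebra-kappa} for the energy bound; and logarithms with the upper limit for the statement about $\overline\kappa$.
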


\begin{proof}
Interpolation between $\ell^1$ and $\ell^4$ gives
\[
\|\widehat{1_S}\|_2
\leq\|\widehat{1_S}\|_1^{\frac{1}{3}}
\|\widehat{1_S}\|_4^{\frac{2}{3}}.
\]
Rearranging, and then using Plancherel and
\eqref{eq:energy-identity}, gives
\[
\operatorname{FR}(1_S)
\geq\frac{\|\widehat{1_S}\|_2^2}{\|\widehat{1_S}\|_4^2}
=\frac{N^{\frac{d}{2}}|S|}{E(S)^{\frac{1}{2}}}.
\]
Substitution of \eqref{eq:algebra-kappa} proves the second inequality.
Under the assumed energy bound, the first inequality yields
\[
\frac{N^{\frac{d}{2}}}{\operatorname{FR}(1_{S_N})}
\leq |S_N|^{\frac{1-\eta}{2}}N^{o(1)}.
\]
Taking logarithms and then the upper limit proves the last assertion.
\end{proof}

For $\eta=1$, energy of order $|S_N|^{2+o(1)}$ forces $\kappa=0$.
The same calculation can be viewed as a generalized Salem estimate at
the fourth moment, with $s=\frac{1+\eta}{4}$ and a subpolynomial loss.

We next use the quantitative Balog--Szemer\'edi--Gowers theorem in the
following standard form: if a finite subset $S$ of an abelian group has
$E(S)\geq\frac{|S|^3}{K}$, then it contains a subset $B$ such that
\[
|B|\geq cK^{-C}|S|,
\qquad |B+B|\leq CK^C|B|,
\]
with absolute constants. See \cite{BalogSzemeredi,Gowers}.

\begin{corollary}
\label{cor:structure}
For every nonempty $S\subseteq\mathbb Z_N^d$, there is a subset
$B\subseteq S$ satisfying
\[
|B|\geq c\mathcal A_N(S)^{-C}|S|,
\qquad
|B+B|\leq C\mathcal A_N(S)^C|B|.
\]
Consequently, if $|S_N|=N^{\alpha+o(1)}$ and
$\kappa(\{1_{S_N}\})=\frac{\alpha}{2}$, then
\[
E(S_N)=|S_N|^3N^{-o(1)},
\]
and there are subsets $B_N\subseteq S_N$ with
\[
|B_N|\geq N^{-o(1)}|S_N|,
\qquad
|B_N+B_N|\leq N^{o(1)}|B_N|.
\]
\end{corollary}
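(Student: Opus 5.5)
The plan is to combine the second inequality in \eqref{eq:energy-fr} with the quantitative Balog--Szemer\'edi--Gowers theorem in the form just recalled.

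\emph{Finite-scale statement.} Proposition~\ref{prop:energy-algebra} gives $E(S)\geq|S|^3/\mathcal A_N(S)^2$. We apply the Balog--Szemer\'edi--Gowers statement in the abelian group $\mathbb Z_N^d$ with $K=\mathcal A_N(S)^2$. This is legitimate because $\mathcal A_N(S)\geq1$ by inversion, so $K\geq1$. The theorem produces $B\subseteq S$ with
\[
|B|\geq c\,\mathcal A_N(S)^{-2C}|S|,
\qquad
|B+B|\leq C\,\mathcal A_N(S)^{2C}|B|.
\]
Renaming the absolute constant $2C$ as $C$, and enlarging it if necessary so that it serves both inequalities, gives the first display. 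Enlarging $C$ is harmless: since $\mathcal A_N(S)\geq1$, replacing $C$ by a larger exponent only weakens the lower bound for $|B|$ and the upper bound for $|B+B|$.

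\emph{Asymptotic statement.} Suppose $\kappa(\{1_{S_N}\})=\frac{\alpha}{2}$. By \eqref{eq:maximal-algebra}, $\mathcal A_N(S_N)=N^{o(1)}$. The energy bound then reads
\[
|S_N|^3\geq E(S_N)\geq|S_N|^3\,\mathcal A_N(S_N)^{-2}=|S_N|^3N^{-o(1)},
\]
where the upper bound is the trivial $E(S)\leq|S|^3$ noted after \eqref{eq:energy-identity}. This gives $E(S_N)=|S_N|^3N^{-o(1)}$. Applying the finite-scale statement at each modulus, the factors $c\,\mathcal A_N(S_N)^{-C}$ and $C\,\mathcal A_N(S_N)^{C}$ are of the form $N^{-o(1)}$ and $N^{o(1)}$, because fixed powers and fixed constants preserve subpolynomial size. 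This yields the sets $B_N$.

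\emph{Main obstacle.} There is no real difficulty. The only point requiring care is the bookkeeping of the absolute constants (the exponent doubling from $K=\mathcal A_N^2$), together with the observation that the lower-limit hypothesis upgrades to a genuine limit via \eqref{eq:maximal-algebra}. That upgrade is what makes the bounds hold at every large modulus rather than along a subsequence.
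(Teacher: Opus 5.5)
Your proposal is correct and follows the paper's proof essentially verbatim. You take $K=\mathcal A_N(S)^2$ from \eqref{eq:energy-fr} in the Balog--Szemer\'edi--Gowers theorem and absorb the factor $2$ into the exponent. At maximal concentration you use \eqref{eq:maximal-algebra} together with $E(S_N)\leq|S_N|^3$, which is exactly the paper's argument; your extra care with $\mathcal A_N(S)\geq1$ and with enlarging $C$ just makes explicit what the paper leaves implicit.
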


\begin{proof}
Proposition~\ref{prop:energy-algebra} allows
$K=\mathcal A_N(S)^2$ in the stated form of the
Balog--Szemer\'edi--Gowers theorem. Absorbing the factor $2$ into the
absolute exponent gives the finite-scale assertions. At maximal
concentration, \eqref{eq:maximal-algebra} gives
$\mathcal A_N(S_N)=N^{o(1)}$. The lower energy bound in
\eqref{eq:energy-fr}, together with $E(S_N)\leq|S_N|^3$, proves the
energy assertion, and the subset estimates follow immediately.
\end{proof}

The finite-scale formulation also records proximity to maximal
concentration. For example, if
$\kappa_N(1_{S_N})\geq\frac{\alpha}{2}-\delta+o(1)$, then
$\mathcal A_N(S_N)\leq N^{\delta+o(1)}$ and the subset losses above
are at most $N^{C\delta+o(1)}$.

The Fourier algebra interpretation connects this discussion with the
quantitative idempotent theorem of Green and Sanders
\cite{GreenSanders}. Their theorem bounds the number of signed coset
indicators needed to represent an indicator in terms of its Fourier
algebra norm. Maximal $\kappa$ allows that norm to grow
subpolynomially, and their general quantitative dependence does not
automatically give a subpolynomial number of cosets in this regime.
Corollary~\ref{cor:structure} supplies a different kind of conclusion:
a large subset has small doubling, with polynomial dependence on the
Fourier algebra norm.

\subsection{Why small doubling does not give the converse}

Intervals have maximal concentration, so it is tempting to expect small
doubling to force the same conclusion. Randomly deleting a fixed
proportion of an interval shows why this expectation fails. The
remaining set retains small doubling, but the deletions introduce a
large diffuse Fourier component.

\begin{proposition}
\label{prop:doubling-counterexample}
For every $0<\alpha<1$, there are sets $S_N\subseteq\mathbb Z_N$
such that
\[
|S_N|\asymp N^\alpha,
\qquad |S_N+S_N|\leq6|S_N|,
\qquad \operatorname{FR}(1_{S_N})\asymp N^{\frac{1}{2}}.
\]
In particular, $E(S_N)\asymp|S_N|^3$ and
$\kappa(\{1_{S_N}\})=0$.
\end{proposition}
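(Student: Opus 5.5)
Take an interval $I_N=\{0,1,\ldots,2M-1\}$ with $M=\lfloor N^\alpha\rfloor$, and let $S_N$ be a uniformly random $M$-element subset of $I_N$. Then $|S_N|=M\asymp N^\alpha$. Since $S_N\subseteq I_N$ and $2M<N$ for large $N$, the sumset satisfies $|S_N+S_N|\leq|I_N+I_N|=4M-1\leq6|S_N|$. The energy claim then follows from the doubling: by Cauchy--Schwarz, $E(S)\geq|S|^4/|S+S|\geq|S|^3/6$, and $E(S)\leq|S|^3$ always. Hence $E(S_N)\asymp|S_N|^3$. So everything reduces to showing $\operatorname{FR}(1_{S_N})\gtrsim N^{1/2}$ for a suitable realization. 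The upper bound $\operatorname{FR}\leq N^{1/2}$ is Lemma~\ref{lem:support-fr}, and it gives $\kappa=0$.

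For the lower bound I would decompose $1_{S_N}=\frac12 1_{I_N}+g$. Here $g=1_{S_N}-\frac121_{I_N}$ takes values $\pm\frac12$ on $I_N$, and its signs come from a uniform balanced split of $I_N$. Lemma~\ref{lem:intervals} (its proof) gives $\|\widehat{1_{I_N}}\|_1\lesssim N^{1/2}\log N$. This is too large to subtract directly, since we want $\|\widehat{1_{S_N}}\|_1\gtrsim (NM)^{1/2}$, but $(NM)^{1/2}=N^{(1+\alpha)/2}$ dominates $N^{1/2}\log N$. Therefore it suffices to prove $\|\widehat g\|_1\gtrsim (NM)^{1/2}$. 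We have $\|\widehat g\|_2^2=\|g\|_2^2=M/2$. Following the proof of Lemma~\ref{lem:random-zero}, I would bound the fourth moment, $\mathbb E\|\widehat g\|_4^4\lesssim M^2/N$, and then apply the $\ell^1$--$\ell^2$--$\ell^4$ interpolation $\|\widehat g\|_1\geq\|\widehat g\|_2^3/\|\widehat g\|_4^2\gtrsim M^{3/2}(N/M^2)^{1/2}=(NM)^{1/2}$. After choosing a realization at most a constant times the expectation, dividing by $|S_N|^{1/2}$ yields $\operatorname{FR}(1_{S_N})\gtrsim N^{1/2}$.

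The main step is the fourth-moment bound for $g$. For each $\xi$,
\[
N^{1/2}\widehat g(\xi)=\sum_{x\in S_N}e_\xi(x)-\tfrac12\sum_{x\in I_N}e_\xi(x),
\]
where $e_\xi(x)=e^{-2\pi i x\xi/N}$. This is the centered sum of a character restricted to $I_N$ over a random $M$-subset of the population $I_N$ of size $2M$. The restricted character does not sum to zero over $I_N$, so Lemma~\ref{lem:random-moments} does not apply verbatim. Its martingale proof, however, only used that $a(x)\in[-1,1]$. I would rerun it with $a$ replaced by $a-\bar a$, where $\bar a$ is the population mean; this changes the boundedness constant to $2$ at most. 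That gives $\mathbb E|N^{1/2}\widehat g(\xi)|^4\leq CM^2$ uniformly in $\xi$. Summing over the $N$ frequencies gives $\mathbb E\|\widehat g\|_4^4\leq CM^2/N$, as needed. I expect this adaptation of the sampling-without-replacement concentration, with the mean subtracted, to be the only point requiring care; the remaining steps are bookkeeping with constants.
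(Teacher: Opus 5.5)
Your proof is correct and follows the paper's route: take a random half of an interval and write $1_{S_N}$ as half the interval indicator plus a $\pm$-valued function $g$. You then get $\|\widehat g\|_1\gtrsim (NM)^{1/2}$ from the second and fourth Fourier moments and absorb the $O(N^{1/2}\log N)$ interval term, with doubling from $S_N+S_N\subseteq I_N+I_N$ and energy from Cauchy--Schwarz.

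The differences are only technical. The paper uses independent signs, so its fourth moment is an elementary pairing count, but $|S_N|$ must then be controlled by Chebyshev and combined with a positive-probability Fourier event. Your exact $M$-of-$2M$ sampling fixes $|S_N|=M$ and lets you interpolate on one good realization. The price is the without-replacement fourth moment, which your mean-subtracted rerun of Lemma~\ref{lem:random-moments} does supply; the increments $a(X_j)-\mu_{j-1}$ are unchanged by the shift, so even the bound $2$ survives.
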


\begin{proof}
Let $M=\lfloor N^\alpha\rfloor$ and $B=\{0,\ldots,M-1\}$.
For $x\in B$, choose independent signs $\varepsilon_x$ taking the
values $1$ and $-1$ with equal probability. Let $g(x)=\varepsilon_x$
on $B$ and zero elsewhere. For every fixed frequency, the unnormalized
Fourier sum is
\[
Z=\sum_{x\in B}\varepsilon_x e^{-\frac{2\pi i x\xi}{N}}.
\]
Independence gives $\mathbb E|Z|^2=M$ and
$\mathbb E|Z|^4\leq3M^2$. For the latter bound, expand the fourth
power. A term can have nonzero expectation only when every sign occurs
an even number of times. There are at most three pairings, each
contributing at most $M^2$ in absolute value.

Interpolation on the probability space gives
\[
\mathbb E|Z|
\geq\frac{(\mathbb E|Z|^2)^{\frac{3}{2}}}{(\mathbb E|Z|^4)^{\frac{1}{2}}}
\geq\sqrt{\frac{M}{3}}.
\]
Summing over frequencies yields
$\mathbb E\|\widehat g\|_1\geq\sqrt{\frac{NM}{3}}$.
For every choice of signs, Cauchy--Schwarz gives
$\|\widehat g\|_1\leq\sqrt{NM}$. These two bounds imply that
\[
\|\widehat g\|_1\geq\frac{1}{2\sqrt3}\sqrt{NM}
\]
with probability bounded below by an absolute positive constant.
Indeed, put $X=\frac{\|\widehat g\|_1}{\sqrt{NM}}$ and
$a=\frac{1}{2\sqrt3}$. Then $0\leq X\leq1$ and
$\mathbb E X\geq2a$. Splitting the expectation gives
$2a\leq a+\mathbb P(X\geq a)$, so the probability is at least $a$.

Let $S=\{x\in B:\varepsilon_x=1\}$. Its cardinality has mean
$\frac{M}{2}$ and variance $\frac{M}{4}$, so Chebyshev's inequality implies
$\frac{M}{3}\leq|S|\leq\frac{2M}{3}$ with probability tending to one.
For large $N$, both this event and the preceding Fourier event occur
for at least one realization. Fix such a realization. Since
$1_S=\frac{1}{2}(1_B+g)$, Lemma~\ref{lem:intervals} gives
\[
\begin{aligned}
\|\widehat{1_S}\|_1
&\geq\frac{1}{2}\|\widehat g\|_1-\frac{1}{2}\|\widehat{1_B}\|_1\\
&\geq c\sqrt{NM}-C\sqrt N(1+\log M)
\gtrsim\sqrt{NM}.
\end{aligned}
\]
The last comparison follows from $\frac{1+\log M}{\sqrt M}\to0$.
The upper bound is again Cauchy--Schwarz. Since $|S|\asymp M$,
the Fourier ratio is comparable to $N^{\frac{1}{2}}$.

Finally, $S+S\subseteq B+B$ and $|B+B|\leq2M$, so
$|S+S|\leq6|S|$. If $r_{S+S}(z)$ counts representations of $z$ as
a sum of two elements of $S$, then
\[
|S|^4=\left(\sum_z r_{S+S}(z)\right)^2
\leq|S+S|\sum_z r_{S+S}(z)^2=|S+S|E(S).
\]
Thus $E(S)\geq\frac{|S|^3}{6}$, and the trivial upper bound finishes
the proof.
\end{proof}

The energy conclusion in Corollary~\ref{cor:structure} is therefore
one-directional. Large additive energy, even together with bounded
doubling of the whole set, does not determine the Fourier-ratio
exponent.

\subsection{A necessary condition for extension estimates}

We use the usual normalization for finite-group extension. If
$\varnothing\neq S\subseteq\mathbb Z_N^d$, let
\[
E_Sg(m)=\frac{1}{|S|}\sum_{x\in S}g(x)e^{\frac{2\pi i x\cdot m}{N}},
\qquad
\|g\|_{L^2(\mu_S)}
=\left(\frac{1}{|S|}\sum_{x\in S}|g(x)|^2\right)^{\frac{1}{2}}.
\]
The output norm below uses counting measure. See
\cite{MockenhauptTao} for the finite-field setting and
\cite{HickmanWright} for general moduli.

\begin{proposition}
\label{prop:extension}
Suppose $|S_N|=N^{\alpha+o(1)}$, where $0<\alpha<d$, and for some
$2<q<\infty$ one has
\begin{equation}
\label{eq:extension-assumption}
\|E_{S_N}g\|_q\leq C\|g\|_{L^2(\mu_{S_N})}
\end{equation}
for all weights $g$ and all $N$, with $C$ independent of $N$. Set
$\kappa_+=\overline\kappa(\{1_{S_N}\})$. If
$\kappa_+<\frac{\alpha}{2}$, then necessarily
\[
q\geq\frac{2(d-2\kappa_+)}{\alpha-2\kappa_+}.
\]
If $\kappa_+=\frac{\alpha}{2}$, no estimate of the form
\eqref{eq:extension-assumption} holds for finite $q$. More generally,
at each modulus every nonzero function $f$ supported on $S_N$ must
satisfy
\begin{equation}
\label{eq:hereditary-obstruction}
N^{\frac{d}{2}}|S_N|^{-\frac{1}{2}}
\operatorname{FR}(f)^{-(1-\frac{2}{q})}\leq C.
\end{equation}
\end{proposition}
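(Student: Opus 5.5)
The plan is to reduce everything to the finite-scale inequality \eqref{eq:hereditary-obstruction}, and then specialize it to $f=1_{S_N}$ along moduli where the concentration exponent approaches its upper limit.

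\textbf{Step 1: rewrite the extension operator in terms of $\widehat f$.} Given $f$ supported on $S=S_N$, take the weight $g=f|_S$. Directly from the definitions,
\[
E_Sg(m)=N^{\frac{d}{2}}|S|^{-1}\widehat f(-m),
\qquad
\|g\|_{L^2(\mu_S)}=|S|^{-\frac12}\|f\|_2=|S|^{-\frac12}\|\widehat f\|_2,
\]
where the last equality is Plancherel. The reflection $m\mapsto-m$ does not change counting-measure norms. Hence \eqref{eq:extension-assumption} is equivalent to
\[
N^{\frac d2}|S|^{-\frac12}\|\widehat f\|_q\leq C\|\widehat f\|_2
\qquad\text{for every $f$ supported on $S$.}
\]

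\textbf{Step 2: bound $\|\widehat f\|_q$ from below by the Fourier ratio.} Apply \eqref{eq:interpolation-fr} with $p=q>2$:
\[
\|\widehat f\|_q\geq\operatorname{FR}(f)^{-(1-\frac2q)}\|\widehat f\|_2 .
\]
Inserting this into the inequality of Step 1 and dividing by $\|\widehat f\|_2\neq0$ gives \eqref{eq:hereditary-obstruction} at every modulus.

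\textbf{Step 3: the asymptotic consequences.} Take $f=1_{S_N}$ and fix $\delta>0$.
\begin{itemize}
\item By the definition of $\overline\kappa$, there are infinitely many $N$ with $\operatorname{FR}(1_{S_N})\leq N^{\frac d2-\kappa_++\delta}$.
\item For large $N$, the support-size hypothesis gives $|S_N|\leq N^{\alpha+\delta}$.
\end{itemize}
Plugging both into \eqref{eq:hereditary-obstruction} along those moduli gives
\[
N^{\frac{d-\alpha}{2}-(\frac d2-\kappa_+)(1-\frac2q)-O(\delta)}\leq C .
\]
As in the proof of Theorem~\ref{thm:asymptotic}, the exponent without $\delta$ simplifies to
\[
\frac{d-2\kappa_+}{q}-\frac{\alpha-2\kappa_+}{2}.
\]
Since $N\to\infty$ along this subsequence, this quantity must be at most $O(\delta)$. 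Letting $\delta\to0$ shows it is $\leq0$.
\begin{itemize}
\item If $\kappa_+<\frac\alpha2$, the inequality rearranges to $q\geq\frac{2(d-2\kappa_+)}{\alpha-2\kappa_+}$.
\item If $\kappa_+=\frac\alpha2$, the exponent equals $\frac{d-\alpha}{q}>0$ for every finite $q$, which is a contradiction. So no estimate of the form \eqref{eq:extension-assumption} holds.
\end{itemize}

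\textbf{Main point of care.} The only delicate step is using the upper limit correctly. The bound on $\operatorname{FR}$ is available only along a subsequence, whereas the bound on $|S_N|$ holds for all large $N$. This is enough, because the extension constant $C$ is uniform over all $N$.
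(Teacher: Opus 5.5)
Your proposal is correct and follows the paper's proof essentially step for step. You use the same identification of $E_{S_N}f$ with $N^{\frac d2}|S_N|^{-1}\widehat f(-\cdot)$, the same application of \eqref{eq:interpolation-fr} with $p=q$ to obtain \eqref{eq:hereditary-obstruction}, and the same specialization to $f=1_{S_N}$ along moduli realizing the upper limit. The only cosmetic difference is that you fix $\delta>0$ and let $\delta\to0$ at the end, whereas the paper passes to a subsequence on which $\kappa_N(1_{S_N})\to\kappa_+$ and absorbs the errors into an $o(1)$ in the exponent.
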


\begin{proof}
Apply \eqref{eq:extension-assumption} to the values of $f$ on $S_N$.
Since $E_{S_N}f(m)=\frac{N^{\frac{d}{2}}}{|S_N|}\widehat f(-m)$ and
$\|f\|_2=\|\widehat f\|_2$, we obtain
\[
N^{\frac{d}{2}}|S_N|^{-\frac{1}{2}}
\frac{\|\widehat f\|_q}{\|\widehat f\|_2}\leq C.
\]
Rearranging \eqref{eq:interpolation-fr} with $p=q$ gives
\[
\frac{\|\widehat f\|_q}{\|\widehat f\|_2}
\geq\operatorname{FR}(f)^{-(1-\frac{2}{q})}.
\]
This proves \eqref{eq:hereditary-obstruction}.

Take $f=1_{S_N}$ and choose a subsequence along which
$\kappa_N(1_{S_N})\to\kappa_+$. On that subsequence,
\[
N^{\frac{d-\alpha}{2}
-(\frac{d}{2}-\kappa_+)(1-\frac{2}{q})+o(1)}\leq C.
\]
The exponent cannot be positive. Rearranging gives
\[
q(\alpha-2\kappa_+)\geq2(d-2\kappa_+).
\]
If $\kappa_+<\frac{\alpha}{2}$, division gives the asserted lower
bound on $q$. If $\kappa_+=\frac{\alpha}{2}$, the inequality becomes
$0\geq2(d-\alpha)$, a contradiction.
\end{proof}

In particular, testing $f=1_A$ in
\eqref{eq:hereditary-obstruction} gives a necessary condition for every
nonempty subset $A\subseteq S_N$, with $|S_N|$ still in the prefactor.
Uniform extension must control concentration on these smaller pieces
as well as on the whole set. Using the upper limit makes the
obstruction at least as strong as the corresponding statement with
$\kappa(\{1_{S_N}\})$.

At $\kappa_+=0$, the proposition recovers the usual condition
$q\geq\frac{2d}{\alpha}$. Additional concentration raises this lower
bound. For synthesis, the same expression supplies an upper endpoint
for a sufficient range. Neither use claims that the expression alone
settles the problem for a given family.

The arithmetic examples make this last point concrete. For paraboloids,
the normalized indicator transform in
Theorem~\ref{thm:paraboloid-exact} is $E_{P_N^{(d)}}1$, up to reflection
of the frequency. Hence
\[
\|E_{P_N^{(d)}}1\|_q^q=D_{N,d}(q),
\qquad \|1\|_{L^2(\mu_{P_N^{(d)}})}=1.
\]
Testing extension on $g=1$ therefore gives the stronger necessary
condition $D_{N,d}(q)=O(1)$. Thus the logarithmic growth at $q=r_c$ over fixed
prime powers excludes uniform extension at that exponent, while the
same growth proves endpoint synthesis. Along the products of primes
in Corollary~\ref{cor:primorial}, the test excludes uniform extension
even at $q=r_*$. This concerns an extension constant independent of
$N$; estimates allowing $N^\varepsilon$ losses are a different question.

\section{Further questions}
\label{sec:questions}

The results leave several concrete problems. The first is to retain
enough information beyond a single power exponent to recognize
synthesis at an endpoint. For paraboloids, the divisor profile
$D_{N,d}(r)$ answers this question exactly. For other algebraic sets,
one may seek comparable profiles indexed by the divisors of the
modulus. The aim is to distinguish cancellation within one arithmetic
scale from the accumulation of Fourier mass across many scales.

Moment curves provide a natural next example where the symmetry
argument already applies. Let
\[
\mathcal M_N^{(d)}=\{(t,t^2,\ldots,t^d):t\in\mathbb Z_N\}.
\]
The binomial formula for $(t+b)^j$ supplies an invertible affine map
on $\mathbb Z_N^d$:
\[
(T_bx)_j=b^j+\sum_{i=1}^j\binom{j}{i}b^{j-i}x_i,
\qquad 1\leq j\leq d.
\]
Its linear part is triangular with diagonal entries one, and it takes
the parameter $t$ to $t+b$. Thus
Theorem~\ref{thm:affine-symmetry} determines its exact synthesis
constant from the indicator moments. For a positive integer $s$, let
$J_{s,d}(N)$ count the tuples
$(t_1,\ldots,t_s,u_1,\ldots,u_s)\in\mathbb Z_N^{2s}$ satisfying
\[
\sum_{j=1}^s t_j^k=\sum_{j=1}^s u_j^k\pmod N,
\qquad 1\leq k\leq d.
\]
Expanding the normalized Fourier moment and using orthogonality gives
\[
\sum_{\xi\in\mathbb Z_N^d}
\left|\frac{1}{N}\sum_{t\in\mathbb Z_N}
e^{-\frac{2\pi i}{N}(\xi_1t+\cdots+\xi_dt^d)}\right|^{2s}
=N^{d-2s}J_{s,d}(N).
\]
Consequently,
\[
\mathcal C_{2s}(\mathcal M_N^{(d)})
=\bigl(N^{d-2s}J_{s,d}(N)\bigr)^{-\frac{1}{2s}}.
\]
This identity is an immediate consequence of the symmetry theorem.
There is already substantial arithmetic information about these sums:
Hickman and Wright \cite[Section 7, especially Corollary 7.3]{HickmanWright}
prove prime-power moment growth in their study of restriction for the
moment curve. Further work should build on those results, asking for
precise growth rates and the dependence on general composite moduli.
A lower bound that diverges is enough for synthesis, even when it is
too small to change a power exponent.

A second question concerns sets with little or no affine symmetry.
Theorem~\ref{thm:intro-finite} remains available, but the indicator need
not be extremal for $\mathcal C_r(S)$. It would be useful to identify
geometric conditions under which a controlled averaging argument
replaces exact transitivity. A related problem is quantitative
stability: when a function nearly attains the exact constant in
Theorem~\ref{thm:affine-symmetry}, how close is it to a modulated
indicator in a natural norm? The equality argument points toward
uniform convexity, but a useful estimate should keep track of the norm
in which closeness is required and remain uniform in the modulus.

A third question is structural. Maximal concentration is equivalent to
subpolynomial Fourier algebra norm and implies the large small-doubling
subset in Corollary~\ref{cor:structure}. Proposition~\ref{prop:doubling-counterexample}
shows that small doubling alone cannot reverse this implication.
Additional hypotheses should control the random fluctuations that the
doubling constant misses. One may ask for a useful description of
indicators with $\mathcal A_N(S_N)=N^{o(1)}$ in a fixed ambient
dimension, especially one that retains information at logarithmic
scales.

Theorem~\ref{thm:all-pair-sharpness} also leaves a fixed-dimension
question over primes. Its construction uses subgroup sizes specific to
prime powers. Can one obtain endpoint counterexamples for every
admissible real pair $(\alpha,\kappa)$ along prime moduli, without
increasing the prescribed ambient dimension?

\end{document}